\newcommand{\nc}{\normalcolor}
\numberwithin{equation}{section}
\theoremstyle{plain}
\newtheorem{thm}{Theorem}[section]
\newtheorem{cor}[thm]{Corollary}
\newtheorem{lem}[thm]{Lemma}
\newtheorem{prop}[thm]{Proposition}
\newtheorem{dfn}[thm]{Definition}
\definecolor{custom-blue}{RGB}{0,99,166}
\begin{document}


\title{{Some obstacle problems for partially hinged plates\\ and related optimization issues}}

\author{Elvise Berchio\thanks{Dipartimento di Scienze Matematiche, Politecnico di Torino, Corso Duca degli Abruzzi, 24, 10129, Torino, Italy. Email: elvise.berchio@polito.it.} \and 
Filomena Feo \thanks{Dipartimento di Ingegneria, Università degli Studi di Napoli ``Parthenope'',
Centro Direzionale Isola C4, Napoli, 80143, Italy. Email: filomena.feo@uniparthenope.it.}
\and 
Antonio Giuseppe Grimaldi \thanks{Dipartimento di Ingegneria, Università degli Studi di Napoli ``Parthenope'',
Centro Direzionale Isola C4, Napoli, 80143, Italy. Email: antoniogiuseppe.grimaldi@collaboratore.uniparthenope.it.}}

\date{}

\maketitle

\begin{abstract}

We study optimization problems for partially hinged rectangular plates, modeling bridge roadways, in the presence of real and artificial obstacles. Real obstacles represent structural constraints to avoid, while artificial ones are introduced to enhance stability. For the former, aiming to prevent collisions, we set up a worst-case optimization problem in which we minimize the amplitude of oscillations with respect to the density distribution; for the latter, aiming to improve the torsional stability, we minimize, with respect to the obstacles, the maximum of a gap function quantifying the displacement between the long edges of the plate. For both problems, existence results are provided, along with a discussion about qualitative properties of optimal density distributions and obstacles.

\end{abstract}

\medskip
\noindent \textbf{Keywords:} {partially hinged plates; obstacle problems; worst-case optimization}
\medskip \\
\medskip
\noindent \textbf{MSC 2020:} {74K20, 35J35, 49J40 }

 \section{Introduction}
Recent years have seen growing interest in the study of partially hinged rectangular plates, due to their applicability in mathematical models of footbridges and suspension bridges. Indeed, the roadway of a bridge can be modeled by a long, thin plate, hinged at the short edges and free at the long edges, see \cite{antunes}-\cite{BeFeGa}, \cite{FeGa}, \cite{bookgaz}.
Expanding upon this model, in this work we consider an obstacle problem for partially hinged rectangular plates, and related optimization issues, where the obstacles considered are either \emph{real} or \emph{artificial}.

By \emph{real} obstacles we mean structural elements of the bridge positioned above or below the roadway, elements that the roadway, i.e., the plate in our model, should not collide with. Examples include multi-level bridges or roadway coverings. A fundamental issue for the related mathematical models analysis is to possibly suggest ways to prevent collisions. In the paper we face this issue by setting up a worst-case optimization problem aiming to reduce the amplitude of the oscillations in the deck by varying the density distribution within the plate.

The second class of obstacle problems considered in the work involves \emph{artificial} obstacles, i.e., obstacles, such as long metal guides (applied, e.g., along the long edges of the plate), intentionally introduced into the structure to enhance its stability. In this case collisions are allowed. It is well known that one of the main causes of instability in suspension bridges is that they are susceptible to torsional instability, which can lead to significant performance issues; for further details we refer to \cite{BeFeGa} and the monograph \cite{bookgaz}. To mitigate this phenomenon, several previous papers were devoted to studying the effect on the structure of modifying the shape of the plate (\cite{BBG}) or the distribution of materials composing it (\cite{berchio,befa}). In order to measure the gain in stability achieved when performing the above-mentioned changes, a function was introduced in \cite{berchio} named \textit{gap function}, measuring the displacement between the long edges, see formula \eqref{gap2bis} below. Then a suitable worst-case optimization problem was set up to improve the torsional stability of the plate.  In our analysis we properly modify this problem by minimizing the maximum of the gap function with respect to suitable families of obstacles. We point out that, while the modification of the shape or the density distribution of the plate must obviously precede its construction, the insertion of artificial obstacles can be thought of as a remedy applicable afterward to existing structures whose performance in terms of stability one wishes to improve.

It is worth mentioning that, while the cost functional we minimize is entirely different, the second class of problems considered in the paper shares similarities with optimal control problems in which the obstacle itself plays the role of the control. These problems were first studied in \cite{Adams}, and subsequently addressed by various authors, both in the second-order and higher-order settings, for linear as well as nonlinear variational inequalities (see, e.g., \cite{Adams1, dimitri, GN} and references therein).

Moreover, we observe that, even if both optimization problems considered in the work are inspired by the one proposed in \cite{berchio}, crucial differences emerge in the analysis. In particular, when real obstacles are involved, the function being optimized is different, whereas in the case of artificial obstacles, the distinction lies in the set over which the optimization is performed. Besides, the model considered in \cite{berchio} did not account for obstacles, and their introduction produces nontrivial modifications in the mathematical framework with the consequent failure of several arguments exploited in the obstacles-free case. For example, the rescaling of an admissible test function may no longer satisfy the admissibility conditions imposed by the obstacles, therefore the strategy of constructing solutions for rescaled data by linearity fails, in general. As a matter of fact, in the proposed analysis these difficulties add to those usually arising when dealing with higher order variational (in)equalities, such as the lack of weak maximum and comparison principles and the failure of reflection and truncation arguments. These technical obstructions explain why, in the higher order case, the literature on obstacles problems and free boundary problems is comparatively less developed than that in the second order one, see e.g., \cite{Al,Brezis, Caf1,Caf2,Caf3,Da, Da2,dipierro,Frehse1,Frehse2,grunau,Lov,NovOk2} and references therein. While most previous studies focus on clamped or fully hinged plate models, to the best of our knowledge, this work is the first to address obstacle problems involving partially hinged plates. Nevertheless, we observe that regularity issues, central in the existing literature, are not explored in this paper. Indeed, even if mathematically interesting, they are are not of central importance to the analysis presented here, particularly in view of the applications discussed above.
\par

\medskip
\par
The paper is organized as follows: in Section \ref{obst_general}, we introduce the mathematical framework of the obstacle problem for partially hinged plates studied in the article, including its variational formulation and sufficient conditions for empty contact sets. Section \ref{real} is devoted to real obstacles, where we set up a worst case optimization problem to prevent collisions by modifying the plate density distribution (see problem \eqref{min max max}). In Section \ref{artificial} we consider the case of artificial obstacles, focusing on minimizing the torsional response via the gap function (see problem \eqref{min max max2bis}). For both problems, existence results are provided, along with qualitative properties of optimal density distributions and obstacles, including symmetry considerations and explicit constructions based on the Green function representation, see Sections \ref{Green} and \ref{Green2}.

\section{The obstacle problem for partially hinged plates}\label{obst_general}

In what follows, up to scaling, we assume that the plate $\Omega$ has length $\pi$ and width $2l \ll \pi$ so that $\Omega = (0,\pi) \times (-l,l) \subset \mathbb{R}^2$. When the plate $\Omega$ is homogeneous, according to the Kirchhoff-Love theory (see \cite[Chapter 1]{book}), the energy $\mathbb{E}$ of the vertical deformation $u$ of $\Omega$ subject to a load $f$ may be computed through the functional
\begin{equation}\label{energia}
    \mathbb{E}(u, \Omega):= \int_\Omega \left( \dfrac{(\Delta u)^2}{2} +(1-\sigma)(u^2_{xy}-u_{xx}u_{yy})-fu\right)  dxdy,
\end{equation}
where $\sigma \in (0,1)$ is the Poisson ratio. Under this condition on $\sigma$ the quadratic part of the energy $\mathbb{E}$ is positive. Since in our model the plate is assumed to be hinged at the short edges, the natural functional space where to set the analysis is
$$H^2_{*}(\Omega):= \left\{  v \in H^2(\Omega) \ :  \ v=0 \ \text{on} \ \{ 0, \pi \} \times (-l,l) \right\}$$
which is a Hilbert space when endowed with the scalar product
\begin{equation*}
     (u,v)_{H^2_{*}(\Omega)}:=  \int_\Omega \left( \Delta u \Delta v +(1-\sigma)(2u_{xy}v_{xy}-u_{xx}v_{yy}-u_{yy}v_{xx})\right)  dxdy,
\end{equation*}
 and the associated norm $\Vert u \Vert^2_{ {H^2_{*}(\Omega)}}:= (u,u)_{H^2_{*}(\Omega)}$ for all $\sigma \in (0,1)$. This property is proved in \cite[Lemma 4.1]{FeGa} where authors show that the norm $\Vert \cdot \Vert^2_{ {H^2_{*}(\Omega)}}$ \nc is equivalent to the standard norm in $H^2(\Omega)$, i.e.\ $\Vert u \Vert^2_{ {H^2(\Omega)}} = \Vert u \Vert^2_{ {L^2(\Omega)}}+\Vert D^2u \Vert^2_{ {L^2(\Omega)}}$.

We are interested in a partially hinged plate restricted to remain between two prescribed obstacles, therefore the functional $\mathbb{E}$ should be minimized on the  closed convex subset of $H^2_{*}(\Omega)$:
\begin{equation}
    H^2_{*,\psi_{-},\psi_{+}}(\Omega):= \left\{  v \in H^2_*(\Omega) \ : \ \psi_{-} \le v \le \psi_{+} \ \text{in} \  \overline \Omega \right\}, \notag
\end{equation}
where $\psi_{-}(x) \le 0 \le \psi_{+}(x)$, with $\psi_{-} \not\equiv 0$ and $\psi_{+} \not\equiv 0$, are two prescribed continuous functions defined on $\overline \Omega$.
We remark that, since $\Omega$ is a planar domain, the space $H^2(\Omega)$  is compactly embedded in $C^0(\overline{\Omega})$ and then the conditions $v=0 \ \text{on} \ \{ 0, \pi \} \times (-l,l)$ and $\psi_{-} \le v \le \psi_{+}$ in $\overline \Omega$ are pointwise satisfied.
Concerning the assumptions on the load $f$, the functional  $\mathbb{E}$ is well-defined if $f$ belongs to $L^1(\Omega)$. Otherwise, we set $(C^0(\overline{\Omega}))'$ the dual space of $C^0(\overline{\Omega})$ and we denote by $\langle \cdot, \cdot \rangle$ the duality product. By  Riesz theorem the dual $(C^0(\Omega))'$ is isometric to the space of Radon measures. If $\mu$ is the measure associated to $f\in (C^0(\overline{\Omega}))'$, then we have to replace $\int_\Omega u \, d\mu$ with $\langle f,u\rangle$ in \eqref{energia} and $ \Vert f \Vert_{(C^0(\overline{\Omega}))'}=\mu(\Omega)$.

For all $f \in (C^0(\overline{\Omega}))'$ the minimizer $u=u_{f} \in H^2_{*,\psi_{-},\psi_{+}}(\Omega)$ of $\mathbb{E}$ satisfies the variational inequality
 \begin{equation}\label{var0}
     (u,\varphi-u)_{H^2_{*}(\Omega)}  \ge \langle f, \varphi-u \rangle \qquad \forall \varphi \in H^2_{*,\psi_{-},\psi_{+}}(\Omega)\,.
 \end{equation}
In a natural way, we associate $u$ with the following contact sets:
\begin{align}\label{sets}
 \Omega_{-}:=\{(x,y)\in \overline{\Omega}: u(x,y) = \psi_{-}(x,y)\} \quad \text{and} \quad \Omega_{+}:=\{(x,y)\in \overline{\Omega} : u(x,y) = \psi_{+}(x,y)\}\,.
\end{align}
Since $u\in C^0(\overline{\Omega})$, the sets $\Omega_{-}$ and $\Omega_{-}$ are closed and if we also assume that  $\psi_-<0 < \psi_+$ they are disjoint. When both $\Omega_{-}$ and $\Omega_{+}$ are empty, namely $\psi_{-} < u < \psi_{+}$ in $\overline{\Omega}$, by taking as test function in \eqref{var0}, $u\pm \varepsilon \phi$ with $\phi\in H^2_*(\Omega)$ and $\varepsilon>0$ sufficiently small, it follows that $u$  satisfies the partially hinged plate problem:
\begin{equation}\label{loadpb0weak}
	(u,\phi)_{H^2_*(\Omega)} =\langle f, \phi \rangle  \qquad\forall \phi\in H^2_*(\Omega)\,
	\end{equation}
    which in strong form reads
\begin{equation}\label{loadpb0}
\begin{cases}
	\Delta^2 u=f
 &  \text{in } \Omega\,, \\
	u=u_{xx}=0 & \text{on } \{0,\pi\}\times ]-l,l[\,, \\
	u_{yy}+\sigma u_{xx}=u_{yyy}+(2-\sigma)u_{xxy}=0 & \text{on } ]0,\pi[\times \{-l,l\}\,.
\end{cases}
\end{equation}
 See, \cite{FeGa} and \cite{book} for a detailed explanation of the model and for the derivation of the boundary conditions in \eqref{loadpb0}.

\par

Let $G_p$ denote the Green function of the biharmonic operator on $\Omega$, under partially hinged boundary conditions, namely the solution to \eqref{loadpb0weak} with $\delta_{p}$ instead of $f$, where $\delta_p$ is the Dirac delta with mass concentrated at $p\in \overline \Omega$. By showing that $G_p$ is strictly positive in $\overline \Omega$ (see Section \ref{Green} for more details), in \cite[Theorem 2.2]{BF2} it was proved that problem \eqref{loadpb0weak} satisfies the \textit{positivity preserving property} which means that if $f\in L^2(\Omega)$ and $u\in H^2_*(\Omega)$ is the solution of \eqref{loadpb0weak}, then the following implication holds:  
    \begin{equation}\label{ppp}
		f\geq 0,\,\, f\not\equiv 0  \text{ in } \Omega\quad  \Rightarrow \quad u>0  \text{ in } (0,\pi)\times[-l,l]\,.
		\end{equation}
    Thanks to \eqref{ppp}, we deduce a sufficient condition for having empty contact sets:
    \begin{prop}\label{empty}
   Let $ \mathcal{F}=\{f\in L^{\infty}(\Omega)\,:\,\|f\|_{\infty}\leq 1\}$. Furthermore, let $G_p$ be the Green function of problem \eqref{loadpb0} (given explicitly in formula \eqref{green3} below) and let $\psi_{\pm} \in C^0(\overline \Omega)$ be such that 
\begin{equation}
|\psi_{\pm}(x,y)|>  \int_{\Omega} G_p(x,y)\,dp \qquad \text{in } \overline \Omega. \label{green.eq}
\end{equation}    
      Then, the contact sets \eqref{sets} of problem \eqref{var0}, with $f\in \mathcal{F}$ and obstacles $\psi_-$ and $\psi_+$, are empty, namely the unique minimizer of $\mathbb{E}$ as defined in \eqref{energia} satisfies $\psi_{-}< u < \psi_{+}$ in $\overline \Omega$.

    \end{prop}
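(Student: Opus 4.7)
The plan is to show that the unique minimizer $u$ of $\mathbb{E}$ on the constrained set $H^2_{*,\psi_{-},\psi_{+}}(\Omega)$ coincides with the solution $v_f$ of the obstacle-free problem \eqref{loadpb0weak}, and then deduce from the Green function representation of $v_f$ together with the assumption \eqref{green.eq} that $\psi_- < v_f < \psi_+$, which precisely means that both contact sets are empty.

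First, I would let $v_f \in H^2_*(\Omega)$ be the unique solution to \eqref{loadpb0weak} for the given datum $f \in \mathcal{F} \subset L^\infty(\Omega) \subset L^2(\Omega)$. By the definition of $G_p$ and the linearity of \eqref{loadpb0weak}, it admits the pointwise Green representation
\begin{equation*}
v_f(x,y) = \int_\Omega G_p(x,y)\, f(p)\, dp \qquad \text{for every } (x,y) \in \overline{\Omega}.
\end{equation*}
Since $G_p > 0$ in $(0,\pi)\times[-l,l]$ by the positivity preserving property \eqref{ppp} (and $G_p \ge 0$ on $\overline{\Omega}$ by continuity), combining $\|f\|_\infty \le 1$ with the hypothesis \eqref{green.eq} yields
\begin{equation*}
|v_f(x,y)| \le \int_\Omega G_p(x,y)\, |f(p)|\, dp \le \int_\Omega G_p(x,y)\, dp < |\psi_{\pm}(x,y)| \qquad \text{in } \overline{\Omega}.
\end{equation*}
Taking into account the sign conditions $\psi_- \le 0 \le \psi_+$, this chain of inequalities gives $\psi_-(x,y) < v_f(x,y) < \psi_+(x,y)$ on $\overline{\Omega}$, so $v_f$ belongs to the admissible set $H^2_{*,\psi_-,\psi_+}(\Omega)$.

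Next I would check that $v_f$ actually solves the variational inequality \eqref{var0}. Since $v_f$ satisfies \eqref{loadpb0weak}, in particular $(v_f,\varphi - v_f)_{H^2_*(\Omega)} = \langle f, \varphi - v_f\rangle$ for every $\varphi \in H^2_{*,\psi_-,\psi_+}(\Omega) \subset H^2_*(\Omega)$, which is stronger than \eqref{var0}. By strict convexity of $\mathbb{E}$ on the closed convex set $H^2_{*,\psi_-,\psi_+}(\Omega)$, the minimizer $u$ is unique and must therefore coincide with $v_f$. Consequently $\psi_- < u < \psi_+$ on $\overline{\Omega}$, which forces $\Omega_- = \Omega_+ = \emptyset$.

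The only non-routine ingredient is the Green representation together with the strict positivity of $G_p$; both of these are granted by the proposition's hypotheses and by \eqref{ppp}, so the argument is essentially a verification rather than an obstacle. The one place to be a little careful is that \eqref{green.eq} is assumed on the entirety of $\overline{\Omega}$, which is needed because the estimate on $v_f$ must be pointwise everywhere (including the closed short edges, where both $v_f$ and the integral of $G_p$ vanish, so the strict inequality on $\psi_\pm$ there is vacuous but consistent with the sign assumption $\psi_- \le 0 \le \psi_+$).
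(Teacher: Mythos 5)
Your proof is correct and follows essentially the same strategy as the paper: show that the unconstrained solution satisfies $|u|\le \int_\Omega G_p\,dp <|\psi_\pm|$ and hence already solves the variational inequality, then conclude by uniqueness. The only cosmetic difference is that you obtain the bound directly from the Green representation and the pointwise positivity of $G_p$, whereas the paper derives the same estimate by applying the positivity preserving property \eqref{ppp} to the loads $1\pm f$ and the auxiliary solution $z$ with $f\equiv 1$; the two arguments are interchangeable.
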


    \begin{proof}
    Denote by $z \in H^2_*(\Omega)$, the unique solution to the partially hinged plate problem \eqref{loadpb0weak} with $f\equiv 1$. From \eqref{ppp}, we infer that $z>0$ in $(0,\pi)\times[-l,l]$. Let $u \in H^2_*(\Omega)$ be the unique solution to the partially hinged plate problem \eqref{loadpb0weak} with $f\in \mathcal{F}$.  Clearly, $z \pm u \in H^2_*(\Omega)$ satisfy \eqref{loadpb0weak} with load $g_{\pm}=1\pm f$. By definition of $\mathcal{F}$, $g_{\pm} \geq 0$ in $\overline \Omega$, hence, by \eqref{ppp}, 
 \begin{equation}
 z \pm u \geq 0 \qquad \text{in } \overline \Omega.  \label{zpmu}
\end{equation}    
Noticing that $ z(x,y)= \int_{\Omega} G_p(x,y)\,dp$ for all $ (x,y)\in \Omega$, \eqref{green.eq} and \eqref{zpmu} give that
     $$\psi_{-}< -z \leq u \leq z < \psi_{+} \qquad \text{in } \overline \Omega.$$  Furthermore, for all $\varphi \in H^2_{*,\psi_{-},\psi_{+}}(\Omega)$, taking as test function in \eqref{loadpb0weak} $\varphi-u$, it is readily seen that $u$ satisfies \eqref{var0} with the equality. This completes the proof.
    \end{proof}

\section{A worst-case optimization problem to avoid collisions}\label{real}
In this section we assume that the obstacles $\psi_{\pm}$ (as defined in Section \ref{obst_general}) are fixed. Expanding on the model proposed in \cite{berchio}, we examine two strategies to prevent contact with the obstacles: increasing the cost associated with the bending energy or reducing the effect of the applied force. In applications these changes can be interpreted, respectively, as reinforcing suitable regions of the plate or changing its surface density distribution. 

More precisely, in the following we denote by $D \subset \Omega$ an open region and $D^c:=\Omega\setminus D$. Furthermore, we assume that $D$ belongs to a certain class of sets $\mathcal{D}$, while $f$ belongs to some space $\mathcal{F}$ of admissible forcing terms. Both classes will be specified later on. Let $0<\alpha< 1< \beta$, we modify the original energy \eqref{energia} into the following two ways:
\begin{equation}\label{energia1}
    \mathbb{E}_1(u, \Omega):= \int_\Omega \left[ \left( \beta \chi_D+\alpha \chi_{D^c} \right)\left( \dfrac{(\Delta u)^2}{2} +(1-\sigma)(u^2_{xy}-u_{xx}u_{yy})\right)-fu \right] dxdy
\end{equation}
and
\begin{equation}\label{energia2}
    \mathbb{E}_2(u, \Omega):= \int_\Omega \left[  \dfrac{(\Delta u)^2}{2} +(1-\sigma)(u^2_{xy}-u_{xx}u_{yy})-\left( \beta \chi_D+\alpha \chi_{D^c} \right)fu \right] dxdy,
\end{equation}
where $\chi_D$ and $ \chi_{D^c}$ are the characteristic functions of $D$ and $D^c$. The choice of the two-step constant function $\beta \chi_D+\alpha \chi_{D^c}$ is suggested by the classical theory for composite membranes which has been recently extended to plates, see, e.g., \cite{Chanillo,cox,CV2} and the survey paper \cite{vecchi}. The constants $\alpha, \beta$ stand for the densities of two different materials, located in different parts of the plate. In order to make the comparison between different choices of $D,\alpha, \beta$ consistent, when $\alpha< 1< \beta$ we assume that $|D|=|\Omega | \dfrac{1-\alpha}{\beta-\alpha}$ so that $\int_{\Omega}\left( \beta \chi_D+\alpha \chi_{D^c} \right) dxdy=|\Omega|$ for all $D\in \mathcal{D}$.  The quadratic part of the functionals \eqref{energia1} and \eqref{energia2} are positive as happens for \eqref{energia} and will be minimized on the space  $H^2_{*,\psi_{-},\psi_{+}}(\Omega)$.
To deal with \eqref{energia1} for any open set $D \subset \Omega$ we introduce the following  bilinear form
 \begin{equation*}
     (u,v)_{D}:=  \int_D \left( \Delta u \Delta v +(1-\sigma)(2u_{xy}v_{xy}-u_{xx}v_{yy}-u_{yy}v_{xx})\right)  dxdy.
 \end{equation*}
Clearly, $(u,v)_{\Omega}=(u,v)_{H^2_{*}(\Omega)}$. Then, for all $f \in (C^0(\overline{\Omega}))'$ the minimizer $u_{f,D}\in H^2_{*,\psi_{-},\psi_{+}}(\Omega)$ of $\mathbb{E}_1$ satisfies the variational inequality
 \begin{equation}\label{var1}
    \alpha (u_{f,D},\varphi-u_{f,D})_{H^2_{*}(\Omega)}+(\beta -\alpha) (u_{f,D},\varphi-u_{f,D})_{D} \ge \langle f, \varphi-u_{f,D} \rangle \qquad \forall \varphi \in H^2_{*,\psi_{-},\psi_{+}}(\Omega).
 \end{equation}
  For what concerns $\mathbb{E}_2$, it is well defined for any $f \in L^p(\Omega)$ with $p\geq 1$, but not in general for any $f\in (C^0(\overline{\Omega}))'$. Moreover, the minimizer satisfies the variational inequality
 \begin{equation}\label{var2}
     (u_{f,D},\varphi-u_{f,D})_{H^2_{*}(\Omega)} \ge  \int_\Omega  \left( \beta \chi_D+\alpha \chi_{D^c} \right)f (\varphi-u_{f,D}) \ dxdy \qquad \forall \varphi \in H^2_{*,\psi_{-},\psi_{+}}(\Omega).
 \end{equation}

Both $\mathbb{E}_1$ and $\mathbb{E}_2$ admit a unique minimizer in $H^2_{*,\psi_{-},\psi_{+}}(\Omega)$. Indeed, by \cite{FeGa}, the bilinear form $(u,\varphi-u)_{H^2_{*}(\Omega)}$ is continuous and coercive. Furthermore, both the functionals on the right-hand side of \eqref{var1} and \eqref{var2} are linear and continuous, besides $H^2_{*,\psi_{-},\psi_{+}}(\Omega)$ is a closed, convex subset of $H^2_{*}(\Omega)$.


Finally, we set up a worst case optimization problem aiming to prevent collisions between the plate and the obstacles. For $(f,D)  \in \mathcal{F} \times \mathcal{D}$, we denote by $u_{f,D} \in H^2_{*,\psi_{-},\psi_{+}}(\Omega)$ the minimizer of $\mathbb{E}_1$ or $\mathbb{E}_2$. We notice that $u_{f,D}$ depends on $\alpha$ and $\beta$ as well but we will not indicate this dependence to keep the notation as simple as possible. Given $D\in\mathcal{D}$, we first look for the \textit{worst} $f \in \mathcal{F}$ yielding the maximal $L^{\infty}$ norm of $u_{f,D}$:
\begin{equation}\label{max max}
    \mathcal{A}^\infty_{D}:= \max_{f \in \mathcal{F}} \mathcal{A}^\infty_{f,D} = \max_{f \in \mathcal{F}} \max_{(x,y) \in \overline \Omega} |u_{f,D}(x,y)|\,.
\end{equation}
Then, we search for the \textit{best} $D \in \mathcal{D}$ minimizing the $L^{\infty}$ norm in the worst scenario:
\begin{equation}\label{min max max}
    \mathcal{A}^\infty:= \min_{D \in \mathcal{D}} \mathcal{A}^\infty_{D} = \min_{D \in \mathcal{D}} \max_{f \in \mathcal{F}} \max_{(x,y) \in \overline \Omega} |u_{f,D}(x,y)|.
\end{equation}
In Section \ref{ex} we provide some classes $\mathcal{F}$ and  $\mathcal{D}$ in which \eqref{max max} and \eqref{min max max} admit a solution, while in Section \ref{Green}, when $\mathbb{E}=\mathbb{E}_2$, we exploit the Green function representation formula to suggest possible locations of best reinforcements, namely of the set $D$ in $\Omega$.

\subsection{Existence results}\label{ex}

We start by showing that $  \mathcal{A}^\infty_{D}$ defined in \eqref{max max} is well defined for a suitable choice of $\mathcal{F}$.


\begin{thm}\label{Th max max}
For any open set $D \subset \Omega$ and every $p \in (1,+\infty]$, the problems
\begin{equation}\label{Pb1}
    \mathcal{A}^\infty_{D}:= \max_{f \in \mathcal{F}} \mathcal{A}^\infty_{f,D} \quad \text{with} \quad  \mathcal{F}:= \left\{ f \in (C^0(\overline{\Omega}))' \ : \ \Vert f \Vert_{
(C^0(\overline{\Omega}))'} \leq  1  \right\} \quad (\text{for} \ \mathbb{E}_1),
\end{equation}
and
\begin{equation}\label{Pb2}
    \mathcal{A}^\infty_{D}:= \max_{f \in \mathcal{F}} \mathcal{A}^\infty_{f,D} \quad \text{with} \quad  \mathcal{F}:= \left\{ f \in L^p(\Omega) \ : \ \Vert f \Vert_{L^p(\Omega)} \leq  1  \right\} \quad (\text{for both} \ \mathbb{E}_1 \ \text{and} \ \mathbb{E}_2 ),
\end{equation}
admit a solution. Furthermore, in both cases, if $-\psi_-\equiv \psi_+=\psi$, and $f$ is a maximizer of $ \mathcal{A}^\infty_{D}$, then $-f$ is a maximizer as well.
\end{thm}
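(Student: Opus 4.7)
The plan is to use the direct method of the calculus of variations. Since $u_{f,D}\in H^2_*(\Omega)\hookrightarrow C^0(\overline{\Omega})$, the quantity $\mathcal{A}^\infty_{f,D}=\max_{\overline{\Omega}}|u_{f,D}|$ is well defined and finite. Pick a maximizing sequence $\{f_n\}\subset\mathcal{F}$; by the Banach--Alaoglu theorem, along a (non-relabelled) subsequence, $f_n$ converges to some $f$: weakly in $L^p(\Omega)$ for $p\in(1,\infty)$, weakly-star in $L^\infty(\Omega)$ when $p=\infty$, or weakly-star in the sense of Radon measures in case \eqref{Pb1}. Lower semicontinuity of the relevant norm under these topologies immediately gives $f\in\mathcal{F}$.

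The crux, and the main obstacle of the proof, is to show that $u_{f_n,D}\to u_{f,D}$ strongly in $H^2_*(\Omega)$, and hence in $C^0(\overline{\Omega})$. Linearity in $f$ is unavailable because the solution operator of a variational inequality is nonlinear, but one can resort to the standard cross-testing trick: use $u_{f,D}$ as test function in \eqref{var1} (or \eqref{var2}) for $u_{f_n,D}$, and conversely use $u_{f_n,D}$ as test function in the same inequality for $u_{f,D}$. Both choices are admissible because both solutions belong to $H^2_{*,\psi_-,\psi_+}(\Omega)$. Adding the resulting inequalities and exploiting the symmetry and coercivity of the bilinear form on the left-hand side (which for $\mathbb{E}_1$ is bounded below by $\alpha\,\|\cdot\|^2_{H^2_*(\Omega)}$, since the integrand of $(v,v)_D$ is pointwise nonnegative for $\sigma\in(0,1)$), one obtains an estimate of the form
\begin{equation*}
c\,\|u_{f_n,D}-u_{f,D}\|^2_{H^2_*(\Omega)}\le \langle f_n-f,\,u_{f_n,D}-u_{f,D}\rangle,
\end{equation*}
where the duality is interpreted as an $L^p$--$L^{p'}$ pairing (weighted by $\beta\chi_D+\alpha\chi_{D^c}$ in the case of $\mathbb{E}_2$) or as the $(C^0(\overline{\Omega}))'$--$C^0(\overline{\Omega})$ pairing in case \eqref{Pb1}. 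A standard a priori estimate, obtained by testing \eqref{var1}/\eqref{var2} against the admissible function $0\in H^2_{*,\psi_-,\psi_+}(\Omega)$ (recall $\psi_-\le 0\le \psi_+$), gives a uniform bound on $\{u_{f_n,D}\}$ in $H^2_*(\Omega)$; the compact embedding $H^2(\Omega)\hookrightarrow C^0(\overline{\Omega})$ then yields $u_{f_n,D}-u_{f,D}\to 0$ strongly in $C^0(\overline{\Omega})$ along a further subsequence. Coupled with the weak(-star) boundedness of $f_n-f$, this forces the right-hand side to vanish, giving the desired strong convergence. In particular $\mathcal{A}^\infty_{f_n,D}\to\mathcal{A}^\infty_{f,D}$, so $f$ is a maximizer.

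Finally, the symmetry statement follows from the variational structure. When $-\psi_-\equiv\psi_+=\psi$, the constraint set $H^2_{*,-\psi,\psi}(\Omega)$ is invariant under $v\mapsto -v$. The quadratic parts of $\mathbb{E}_1$ and $\mathbb{E}_2$ in \eqref{energia1}--\eqref{energia2} are even in $u$, while the terms containing $f$ are linear in $u$; therefore $\mathbb{E}_i(-v;f)=\mathbb{E}_i(v;-f)$ for $i=1,2$. By uniqueness of the constrained minimizer, $u_{-f,D}=-u_{f,D}$, whence $\mathcal{A}^\infty_{-f,D}=\|u_{-f,D}\|_\infty=\|u_{f,D}\|_\infty=\mathcal{A}^\infty_{f,D}$, and $-f$ is a maximizer whenever $f$ is.
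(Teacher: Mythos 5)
Your proof is correct, and the key step is handled by a genuinely different device from the paper's. The paper isolates the continuity of $(f,D)\mapsto\mathcal{A}^\infty_{f,D}$ in a separate statement (Proposition \ref{cont}), proved by extracting a weak $H^2_*$-limit $\overline u$ of the solutions, passing to the limit in the variational inequality term by term (weak lower semicontinuity of the quadratic form, plus the $L^1$-convergence of $\chi_{D_n}$), and identifying $\overline u=u_{f,D}$ by uniqueness; only weak $H^2_*$-convergence together with $C^0$-convergence is obtained there. You instead fix $D$ and use the classical cross-testing (monotonicity) trick for variational inequalities --- admissible here precisely because both solutions lie in the \emph{same} convex set $H^2_{*,\psi_-,\psi_+}(\Omega)$, the obstacles being fixed --- which yields the stronger conclusion $u_{f_n,D}\to u_{f,D}$ strongly in $H^2_*(\Omega)$ with a much shorter computation. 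The trade-off is that the paper's proposition is also invoked for varying domains $D_n\to D$ in Theorem \ref{minthm}, where cross-testing would leave uncontrolled error terms coming from $(\cdot,\cdot)_{D_n}$ versus $(\cdot,\cdot)_D$; for Theorem \ref{Th max max} alone your route is cleaner. One phrasing to tighten: the compact embedding $H^2(\Omega)\hookrightarrow C^0(\overline\Omega)$ gives $u_{f_n,D}\to w$ in $C^0(\overline\Omega)$ for \emph{some} $w$ along a subsequence, not directly $w=u_{f,D}$; this is harmless, since $\langle f_n-f,\,u_{f_n,D}-u_{f,D}\rangle\to 0$ already follows from $f_n\rightharpoonup^* f$ together with $u_{f_n,D}-u_{f,D}\to w-u_{f,D}$ in $C^0(\overline\Omega)$, and the coercivity inequality then forces $w=u_{f,D}$ a posteriori --- but as written the sentence presupposes the limit it is meant to establish. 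The symmetry statement is argued essentially as in the paper.
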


\medskip

In order to prove Theorem \ref{Th max max} we first show the continuity of the map $\mathcal{A}^\infty_{f,D}$ defined in \eqref{max max}.

\begin{prop}\label{cont}
   Let $p \in (1,+\infty]$ and let $\mathcal{D}$ be a class of open subdomains of $\Omega$ closed with respect to the $L^1$ topology. Then the maps
   $$(f,D) \in (C^0(\overline{\Omega}))' \times \mathcal{D} \mapsto \mathcal{A}^\infty_{f,D} \in [0,+\infty) \quad (\text{for} \ \mathbb{E}_1)$$
   $$(f,D) \in L^p(\Omega) \times \mathcal{D} \mapsto \mathcal{A}^\infty_{f,D} \in [0,+\infty) \quad (\text{for both} \ \mathbb{E}_1 \ \text{and} \ \mathbb{E}_2 ) $$
   are sequentially continuous, when $(C^0(\overline{\Omega}))'$ and $L^p(\Omega)$ are endowed with the weak* topology and $\mathcal{D}$ is endowed with the $L^1$ topology.
   \end{prop}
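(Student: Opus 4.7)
The plan is to show that if $f_n \to f$ in the relevant weak* topology and $\chi_{D_n} \to \chi_D$ in $L^1(\Omega)$, then $u_n := u_{f_n, D_n} \to u_{f, D}$ in $C^0(\overline\Omega)$; from this, the convergence $\mathcal{A}^\infty_{f_n, D_n} = \|u_n\|_{L^\infty} \to \|u_{f, D}\|_{L^\infty}$ is immediate. The compact embedding $H^2_*(\Omega) \hookrightarrow C^0(\overline\Omega)$ (valid since $\Omega \subset \mathbb{R}^2$) will be used throughout.

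I first obtain a uniform $H^2_*$-bound on $u_n$ by testing \eqref{var1} (resp.\ \eqref{var2}) with the admissible $\varphi = 0 \in H^2_{*,\psi_-,\psi_+}(\Omega)$, using coercivity of $(\cdot,\cdot)_{H^2_*}$, the non-negativity of $(u_n, u_n)_{D_n}$, the automatic boundedness of $\|f_n\|$ in the dual norm, and the compact embedding. Along a subsequence, then, $u_n \rightharpoonup u$ weakly in $H^2_*$ and $u_n \to u$ in $C^0(\overline\Omega)$; passing the obstacle inequalities $\psi_- \le u_n \le \psi_+$ pointwise to the limit yields $u \in H^2_{*,\psi_-,\psi_+}(\Omega)$.

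The heart of the matter is the limit passage in \eqref{var1}. For a fixed test $\varphi \in H^2_{*,\psi_-,\psi_+}(\Omega)$, I split the left-hand side into ``linear'' and ``quadratic'' pieces. The linear term $(u_n, \varphi)_{H^2_*}$ converges to $(u, \varphi)_{H^2_*}$ by weak convergence, while $(u_n, \varphi)_{D_n}$ is a sum of pieces of the form $\int_\Omega (u_n)_{ij}\,(\chi_{D_n}\varphi_{kl})\,dxdy$; here $(u_n)_{ij} \rightharpoonup u_{ij}$ weakly in $L^2$, whereas $\chi_{D_n}\varphi_{kl} \to \chi_D\varphi_{kl}$ strongly in $L^2$ (since $\|\chi_{D_n}\varphi_{kl} - \chi_D\varphi_{kl}\|_{L^2}^2 = \int_{D_n \triangle D}|\varphi_{kl}|^2\,dxdy \to 0$ by absolute continuity of the integral applied to $|\varphi_{kl}|^2 \in L^1$), so the weak-strong pairing produces the limit $(u,\varphi)_D$. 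The quadratic term $(u_n, u_n)_{H^2_*}$ is weakly lower semicontinuous in the standard way. The delicate piece $(u_n, u_n)_{D_n}$ is handled via the algebraic rewriting
\begin{equation*}
(v, v)_D = \int_D \bigl[(1-\sigma)|D^2 v|^2 + \sigma(\Delta v)^2\bigr]\,dxdy \ge 0,
\end{equation*}
namely a sum of squares of linear expressions $L_i v$ in the second derivatives; then $(u_n, u_n)_{D_n} = \sum_i c_i \|\chi_{D_n} L_i u_n\|_{L^2(\Omega)}^2$, and since $\chi_{D_n} L_i u_n \rightharpoonup \chi_D L_i u$ weakly in $L^2$ by the same weak-strong mechanism, weak lower semicontinuity of the $L^2$-norm gives $\liminf_n (u_n, u_n)_{D_n} \ge (u, u)_D$. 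The right-hand side $\langle f_n, \varphi - u_n\rangle$ converges to $\langle f, \varphi - u\rangle$ by pairing weak* convergence of $f_n$ with strong convergence of $u_n$ in the predual (here $u_n \to u$ in $C^0(\overline\Omega)$, respectively in $L^{p'}(\Omega)$ via Rellich). Assembling these ingredients in \eqref{var1} shows that $u$ satisfies the variational inequality with data $(f, D)$; uniqueness forces $u = u_{f, D}$, so that the full sequence converges.

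The case of $\mathbb{E}_2$ is analogous and simpler, since the bilinear form in \eqref{var2} does not depend on $D$: the entire $D$-dependence sits in the right-hand side $\int_\Omega (\beta\chi_{D_n} + \alpha\chi_{D_n^c}) f_n (\varphi - u_n)\,dxdy$, whose limit is handled by the $L^s$-convergence of $\chi_{D_n}$ for any $s < \infty$ together with the usual weak-strong pairing of $f_n$ with $\varphi - u_n$ in $L^p \times L^{p'}$. The main obstacle throughout is the simultaneous variation of $u_n$ and $D_n$ in the bilinear form $(u_n, u_n)_{D_n}$ appearing in the $\mathbb{E}_1$ case; the sum-of-squares representation of the quadratic density is precisely the algebraic observation that restores weak lower semicontinuity against the moving indicator $\chi_{D_n}$.
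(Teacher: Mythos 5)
Your proof is correct and follows the same overall architecture as the paper's: uniform $H^2_*$-bound via the test function $\varphi=0$, weak compactness plus the compact embedding $H^2_*(\Omega)\hookrightarrow C^0(\overline\Omega)$, passage to the limit in the variational inequality, identification of the limit by uniqueness, and the subsequence argument to upgrade to full-sequence convergence. The one place where you genuinely diverge is the crux of the proposition, namely the term $(u_n,u_n)_{D_n}$ with both the function and the domain moving. The paper handles it by treating $(\cdot,\cdot)_{D}$ abstractly as a semi-inner product and running a Cauchy--Schwarz chain (their estimates \eqref{uDn}--\eqref{unDn}) to get $(\overline u,\overline u)_D\leq (u_n,u_n)_{D_n}+o(1)$; you instead use the explicit pointwise identity $(v,v)_D=\int_D\bigl[(1-\sigma)|D^2v|^2+\sigma(\Delta v)^2\bigr]$ (which is correct: expanding both sides gives $v_{xx}^2+v_{yy}^2+2(1-\sigma)v_{xy}^2+2\sigma v_{xx}v_{yy}$), write the quadratic form as a positive combination of $L^2$-norms of $\chi_{D_n}L_iu_n$, check that each $\chi_{D_n}L_iu_n\rightharpoonup\chi_DL_iu$ weakly in $L^2$ by the weak--strong pairing, and invoke weak lower semicontinuity of the norm. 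Both arguments ultimately rest on the positive semidefiniteness of the localized quadratic density for $\sigma\in(0,1)$; yours makes that positivity explicit and avoids the intermediate Cauchy--Schwarz bookkeeping, which is arguably cleaner, while the paper's stays at the level of the bilinear form without needing the sum-of-squares identity. The remaining steps (weak--strong pairing for the linear term $(u_n,\varphi)_{D_n}$, the treatment of the right-hand side in both the $(C^0(\overline\Omega))'$ and $L^p$ cases, and the simpler $\mathbb{E}_2$ case where the $D$-dependence sits only in the load) match the paper's in substance.
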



\proof

We first consider the case of energy $\mathbb{E}_1$. Let $\{(f_n,D_n) \}_n \subset \mathcal{F} \times \mathcal{D}$ be such that $(f_n,D_n) \to (f,D)$ as $n \to + \infty$, that is $f_n \rightharpoonup f$ weakly* in $\mathcal{F}$ ( i.e. $f_n \rightharpoonup f$ weakly* in $(C^0(\overline{\Omega}))'$ or $f_n \rightharpoonup f$ in $L^p(\Omega)$ if $1<p<\infty$  or  $f_n \rightharpoonup f$ weakly* in $L^\infty(\Omega)$) \nc
and $\chi_{D_n} \to \chi_D$ strongly in $L^1(\Omega)$ as $n \to + \infty$. For simplicity of notation, we denote by $u=u_{f,D}$ and $u_n=u_{f_n,D_n}$, for all $n \in \mathbb{N}$, the corresponding solutions of \eqref{var1}. In particular, for every $n \in \mathbb{N}$, it holds
 \begin{equation}\label{eq1}
     \alpha(u_n,\varphi-u_n)_{H^2_{*}(\Omega)}+(\beta -\alpha) (u_n,\varphi-u_n)_{D_n} \ge \langle f_n, \varphi-u_n \rangle \qquad \forall \varphi \in  H^2_{*,\psi_{-},\psi_{+}}(\Omega).
 \end{equation}
Since the zero constant function belongs to the class of admissible functions $ H^2_{*,\psi_{-},\psi_{+}}(\Omega)$, we have
 \begin{equation}
     (u_n,-u_n)_{H^2_{*}(\Omega)}+(\beta -\alpha) (u_n,-u_n)_{D_n} \ge \langle f_n, -u_n \rangle \notag
 \end{equation}
and we deduce that for a suitable constant $C>0,$ independent of $u_n$, there holds
\begin{equation*}
    \Vert u_n \Vert^2_{H^2_*(\Omega)} \le C \ \Vert f_n \Vert_{ \mathcal{F}} \ \Vert u_n \Vert_{H^2_*(\Omega)}. 
\end{equation*}
Since $f_n \rightharpoonup f$ weakly* in $ \mathcal{F}$, the sequence $\{ f_n \}_n$ is bounded in $\mathcal{F} $. Therefore, $\{ u_n \}_n$ is bounded in $H^2_*(\Omega)$, and then there exists $\overline{u} \in H^2_*(\Omega)$ such that, up to a subsequence, $u_n \rightharpoonup \overline{u}$ weakly in $H^2_*(\Omega)$. Moreover, since $H^2_{*,\psi_{-},\psi_{+}}(\Omega)$ is closed and convex, we have that $\overline{u} \in H^2_{*,\psi_{-},\psi_{+}}(\Omega)$.
\\
 By the weak convergence of $u_n$ to $\overline{u}$ in $H^2_{*}(\Omega)$, $\Vert \overline{u} \Vert_{H^2_*(\Omega)}\leq \Vert u_n \Vert_{H^2_*(\Omega)}+o(1)$ as $n\rightarrow +\infty$ and the first term on the left-hand side of \eqref{eq1} can be estimated as follows
 \begin{equation}\label{eq02}
   (u_n,\varphi-u_n)_{H^2_{*}(\Omega)}  = (u_n,\varphi)_{H^2_{*}(\Omega)} -\Vert u_n \Vert_{H^2_*(\Omega)}^2 \le (\overline{u},\varphi-\overline{u})_{H^2_{*} (\Omega)}+o(1)\quad \text{as } n\rightarrow +\infty\,.
\end{equation}
Next we consider the second term on the left-hand side of \eqref{eq1}. To this aim, we first notice that
\begin{equation}\label{uDn} 
(\overline{u},\overline{u})_{D \setminus D_n}^{1/2} \rightarrow 0 \quad \text{as } n\rightarrow +\infty\,
 \end{equation}
 and 
 \begin{equation}\label{unDn}
(\overline{u},\overline{u})_D\leq (u_n,u_n)_{D_n}+o(1)\quad \text{as } n\rightarrow +\infty\,.
 \end{equation}
 The limit in \eqref{uDn} follows by exploiting the strong convergence of $\chi_{D_n}$ to $\chi_D$ in $L^1(\Omega)$ while \eqref{unDn} follows by combining the weak convergence of $u_n$ to $\overline{u}$ in $H^2_{*}(\Omega)$, the boundedness of $\{ u_n \}_n$, and \eqref{uDn}. Indeed, we have
 \begin{align*}
 (\overline{u},\overline{u})_D=(u_n,\overline{u})_D+o(1)&= (u_n,\overline{u})_{D \cap D_n}+(u_n,\overline{u})_{D \setminus D_n}+o(1)\\
 &\leq (u_n,u_n)_{D}^{1/2}(\overline{u},\overline{u})_{D}^{1/2}+(u_n,u_n)_{D }^{1/2}(\overline{u},\overline{u})_{D \setminus D_n}^{1/2}+o(1)\\
 &\leq (u_n,u_n)_{D}^{1/2}(\overline{u},\overline{u})_{D}^{1/2}+o(1)
 \quad \text{as } n\rightarrow +\infty\,.
 \end{align*}
Finally, from \eqref{unDn}, we get
\begin{align}\label{final}
     (u_n,\varphi-u_n)_{D_n} \leq &   (u_n,\varphi)_{D_n}- (\overline{u},\overline{u})_D+o(1) \notag\\
     = & \  (u_n,\varphi)_{D}  +   (u_n,\varphi)_{D_n\setminus D}-   (u_n,\varphi)_{D \setminus D_n}- (\overline{u},\overline{u})_D+o(1) \notag \\
     = & \  (\overline{u},\varphi-\overline{u} )_{D}  +o(1),
\end{align}
where again we have exploited the strong convergence of $\chi_{D_n}$ to $\chi_D$ in $L^1(\Omega)$ and the boundedness of the sequence $\{ u_n \}_n$ to get
\begin{equation*}
    |(\varphi,u_n)_{D_n\setminus D}| \le C \Vert \varphi \Vert_{H^2_*(D_n \setminus D)} \to 0 \ \text{as} \ n \to + \infty
\end{equation*}
and
\begin{equation*}
    |(\varphi,u_n)_{D\setminus D_n}| \le C \Vert \varphi \Vert_{H^2_*(D \setminus D_n)} \to 0 \ \text{as} \ n \to + \infty.
\end{equation*}
 From the compactness of the embedding $H^2(\Omega) \subset C^0(\overline{\Omega})$, we deduce that $u_n \to \overline{u}$ in $C^0(\overline{\Omega})$. This, together with the fact that $f_n \rightharpoonup f$ weakly* in $\mathcal{F}$, gives that
 \begin{equation}\label{eq8}
     \langle f_n, \varphi-u_n \rangle \to \langle f, \varphi-\overline{u} \rangle \ \text{as} \ n \to + \infty.
 \end{equation}
Passing to the limit as $n \to + \infty$ in \eqref{eq1} and using \eqref{eq02}, \eqref{final} and \eqref{eq8}, we get
 \begin{equation*}
     \alpha(\overline{u},\varphi-\overline{u})_{H^2_{*}(\Omega)}+(\beta -\alpha) (\overline{u},\varphi-\overline{u})_{D} \ge \langle f, \varphi-\overline{u} \rangle \qquad \forall \varphi \in H^2_{*, \psi_{-},\psi_{+}}(\Omega).
 \end{equation*}
 Thus, we can conclude that $\overline{u}=u$ in $\Omega$, by the uniqueness of solutions.
\\
Since $u_n \to u$ in $C^0(\overline{\Omega})$, we find that $\mathcal{A}_{f_n,D_n} \to \mathcal{A}_{f,D}$ uniformly as $n \to + \infty$ in $[0,\pi]$, i.e.\ $\mathcal{A}^\infty_{f_n,D_n} \to \mathcal{A}^\infty_{f,D}$ as $n \to + \infty$.

A similar proof works for the energy $\mathbb{E}_2$.
 \endproof

\medskip

\medskip

\proof [Proof of Theorem \ref{Th max max}] Fix $D \subset \mathcal{D}$. We consider the two cases.
\par \medskip \par
\textit{ $\bullet$ CASE 1:} $\mathcal{F}:= \left\{ f \in (C^0(\overline{\Omega}))' \ : \ \Vert f \Vert_{
(C^0(\overline{\Omega}))'} \leq  1  \right\}$ and the energy $\mathbb{E}_1$.    Let $\{ f_n \}_n \subset (C^0(\overline{\Omega}))'$ be a maximizing sequence for \eqref{Pb1} such that $\|f_n\|_{ (C^0(\overline{\Omega}))'}\leq 1$. Since $\{ f_n \}_n$ is bounded in $(C^0(\overline{\Omega}))'$, 
there exists $\overline{f} \in (C^0(\overline{\Omega}))'$ such that, up to a subsequence, $f_n \rightharpoonup^* \overline{f}$ in $(C^0(\overline{\Omega}))'$. Then, by Proposition \ref{cont}, we get
$$\mathcal{A}^\infty_{\overline{f},D}=\mathcal{A}^\infty_{D}  .$$ On the other hand, by weak lower semicontinuity of the norm, we have
$$ \Vert \overline{f} \Vert_{(C^0(\overline{\Omega}))'} \le \liminf_{n \to + \infty} \Vert f_n \Vert_{(C^0(\overline{\Omega}))'}\leq 1.$$
Hence, $\overline{f} \in \mathcal{F}$ and solves problem \eqref{Pb1}.

\par \medskip \par
\textit{$\bullet$ CASE 2:}  $\mathcal{F}:= \left\{ f \in L^p(\Omega) \ : \ \Vert f \Vert_{L^p(\Omega)} \leq  1  \right\}$ and the energy $\mathbb{E}_1$ or $\mathbb{E}_2$.
Let $\{ f_n \}_n \subset L^p(\Omega)$ be a maximizing sequence for \eqref{Pb2} such that $\|f_n\|_{L^p}\leq 1$. Since $\{ f_n \}_n$ is bounded in $ L^p(\Omega)$, there exists $\overline{f} \in  L^p(\Omega)$ such that, up to a subsequence, $f_n \rightharpoonup \overline{f}$ in $L^p(\Omega)$ for $1<p<\infty$ and $f_n \rightharpoonup^* \overline{f}$ in $L^\infty(\Omega)$.  By Proposition \ref{cont}, we get
$$\mathcal{A}^\infty_{\overline{f},D}=\mathcal{A}^\infty_{D}  .$$
Furthermore, as in the previous case, we have that $\overline{f} \in \mathcal{F}$, hence it solves problem \eqref{Pb2}. \par
Finally, we show that if $-\psi_-\equiv \psi_+\equiv \psi$, and $f$ is a maximizer of $ \mathcal{A}^\infty_{D}$, then $-f$ is a maximizer as well. We first observe that $\varphi  \in H^2_{*,-\psi,\psi}(\Omega)$ if and only if $-\varphi \in H^2_{*,-\psi,\psi}(\Omega)$.  Consequently, if $u_{f,D} \in H^2_{*,-\psi,\psi}(\Omega)$ satisfies either \eqref{var1} or \eqref{var2}, then $-u_{f,D}\in H^2_{*,-\psi,\psi}(\Omega)$ and it satisfies the corresponding inequality with $-f$ in place of $f$, thus proving the claim.   
\endproof

\medskip

In order to show that problem \eqref{min max max} is well-defined as well, we consider some classes of admissible domains originally introduced in \cite{berchio} and which may have applicative interest.

\medskip

\begin{dfn}\label{rinforzi ammissibili} \cite[Definition 3.1]{berchio} 
\smallskip
(a) \emph{Cross-type reinforcements}:
$$\mathcal C :=\Big\{D\subset\Omega: \, D=\Big(\bigcup_{i=1}^N ]x_i-\mu,x_i+\mu[\times ]-l,l[\Big)\cup\Big(\bigcup_{j=1}^M ]0,\pi[\times ]y_j-\varepsilon,y_j+\varepsilon[ \Big)\Big\}\,,$$
where for $N,M\in\mathbb N$, $\mu\in ]0,\pi/2N[$, $\varepsilon\in]0,l/M[$, $x_i\in [\mu, \pi-\mu]$ for $i=1,\dots, N$ with
$x_{i+1}-x_i>2\mu$ for $i\le N-1$, and $y_j\in[-l+\varepsilon,l-\varepsilon]$ for $j=1,\dots, M$ with $y_{j+1}-y_j>2\varepsilon$ for $j\le M-1$.

 \smallskip
(b)  \emph{Tiles of rectangular shapes}:
$$\mathcal T :=\Big\{D\subset\Omega: \, D=\bigcup_{i=1}^N R^i, \text{$R^i\subset\Omega$ is an open rectangle with inradius  $\geq\varepsilon$
}\Big\}\,,$$
where $N\in\mathbb N$ and $\varepsilon\in ]0,l[$.

 \smallskip
(c) \emph{Networks of bounded length}:
$$\mathcal N :=\big\{D\subset\Omega:\, \text{$D=\Sigma^\varepsilon$ where $\Sigma\subset \overline\Omega$ is closed, connected, $\mathcal H^1(\Sigma)\leq L$}\big\}\,,$$
where $\varepsilon\in]0, l[$ and $L>0$. $\mathcal H^1$ denotes the one-dimensional Hausdorff measure of a set, and $\Sigma^\varepsilon$ represents the $\varepsilon$-tubular neighborhood of $\Sigma$, namely the set of points in $\Omega$ at distance to $\Sigma$ less than $\varepsilon$.
 \smallskip

(d) \emph{Lipschitz trusses}:
$$\mathcal L :=\big\{D\subset \Omega: \, \text{$D$ open with the inner $\varepsilon$-cone property}\big\}\,,$$
where $\varepsilon\in ]0,l[$ and the inner $\varepsilon$-cone property means that at every point $x$ of the boundary $\partial D$ there is some truncated cone from $x$ with an opening angle $\varepsilon$ and radius $\varepsilon$ inside $D$.
\end{dfn}

Some of these classes are monotone with respect to set inclusion, namely $\mathcal C\subset \mathcal T\subset \mathcal L$,
for suitable choices of the parameters $N, \varepsilon, \mu, L$. We show that problem \eqref{min max max} is well-defined if the class $\mathcal{D}$ is one of those introduced in the above definition.

\medskip

\begin{thm}\label{minthm}
    Let $\alpha< 1< \beta$ and let $\mathcal C, \mathcal T, \mathcal N$ and $\mathcal L$ be the classes of sets introduced in Definition \ref{rinforzi ammissibili}. The minimization problem
    $$\mathcal{A}^\infty:= \min_{D \in \mathcal{D}} \mathcal{A}^\infty_{D}$$
    with
    \begin{equation*}
        \mathcal{D}:=\left\{ D \in \mathcal C \,(\text{or } \mathcal T \text{or }  \mathcal N \text{or }  \mathcal L) \text{ and }   \ |D| = |\Omega |\, \dfrac{1-\alpha}{\beta-\alpha}  \right\}
    \end{equation*}
    admits a solution.
\end{thm}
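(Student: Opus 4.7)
The plan is to apply the direct method of calculus of variations, leveraging the continuity of $D \mapsto \mathcal{A}^\infty_{f, D}$ established in Proposition \ref{cont} together with the existence of maximizing forces provided by Theorem \ref{Th max max}. Concretely, I would take a minimizing sequence $\{D_n\}_n \subset \mathcal{D}$ so that $\mathcal{A}^\infty_{D_n} \to \mathcal{A}^\infty$, extract an $L^1$-convergent subsequence $\chi_{D_n} \to \chi_D$ with $D \in \mathcal{D}$, and then verify that $D$ is a minimizer.

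For the compactness step, the key point is that each of the four classes $\mathcal{C}, \mathcal{T}, \mathcal{N}, \mathcal{L}$ is closed under $L^1$-convergence of characteristic functions. For $\mathcal{C}$ and $\mathcal{T}$, the volume constraint $|D| = |\Omega|(1-\alpha)/(\beta-\alpha)$ combined with the fixed lower bound $\varepsilon$ on the inradius forces an a priori bound on the number of rectangles, so the positions and sizes vary in a compact parameter set from which a convergent subsequence is extracted directly. For $\mathcal{N}$ one combines Blaschke's selection theorem for closed connected sets in the Hausdorff topology with Golab's lower-semicontinuity theorem for $\mathcal{H}^1$, and then checks that the $\varepsilon$-tubular neighborhoods of the Hausdorff-converging sets converge in $L^1$. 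For $\mathcal{L}$ one invokes Chenais' classical compactness theorem for families of uniformly Lipschitz domains. In all four cases the volume constraint is preserved by $L^1$-convergence, so the limit $D$ belongs to $\mathcal{D}$.

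Once the convergence $\chi_{D_n} \to \chi_D$ in $L^1(\Omega)$ is secured, the conclusion follows quickly. By Theorem \ref{Th max max} there exists $g^{*} \in \mathcal{F}$ attaining $\mathcal{A}^\infty_D = \max_{f \in \mathcal{F}} \mathcal{A}^\infty_{f, D}$. By Proposition \ref{cont}, $\mathcal{A}^\infty_{g^{*}, D_n} \to \mathcal{A}^\infty_{g^{*}, D} = \mathcal{A}^\infty_D$, while $\mathcal{A}^\infty_{g^{*}, D_n} \le \mathcal{A}^\infty_{D_n}$ by definition of the maximum. Passing to the limit yields $\mathcal{A}^\infty_D \le \liminf_n \mathcal{A}^\infty_{D_n} = \mathcal{A}^\infty$, and since the reverse inequality $\mathcal{A}^\infty_D \ge \mathcal{A}^\infty$ is immediate from $D \in \mathcal{D}$, equality holds and $D$ is a minimizer.

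The main obstacle I anticipate is the $L^1$-compactness of the geometric classes, in particular $\mathcal{N}$ and $\mathcal{L}$, which relies on classical but nontrivial tools from geometric measure theory and shape optimization (Blaschke, Golab, Chenais). These compactness results are however already used in the analogous obstacle-free setting of \cite{berchio}, and they carry over verbatim to the present problem since the admissible families $\mathcal{D}$ depend only on the geometry of the reinforcement region and are unaffected by the obstacles $\psi_{\pm}$.
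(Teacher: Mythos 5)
Your proposal is correct and follows essentially the same route as the paper: extract an $L^1$-convergent minimizing subsequence using the compactness of the classes $\mathcal C,\mathcal T,\mathcal N,\mathcal L$ (established in \cite[Theorem 3.2]{berchio}), then combine the worst force $\overline f$ for the limit set $D$ from Theorem \ref{Th max max} with the continuity of Proposition \ref{cont} to get the lower semicontinuity $\mathcal{A}^\infty_{D}=\lim_n\mathcal{A}^\infty_{\overline f,D_n}\le\liminf_n\mathcal{A}^\infty_{D_n}$. The only difference is that you sketch the geometric compactness arguments (Blaschke, Golab, Chenais) which the paper simply cites from \cite{berchio}; this is fine but not needed.
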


\begin{proof}
We first notice that by its definition, the continuity proved in Proposition \ref{cont} and the existence of a worst force given in Theorem \ref{Th max max}, the functional $D\mapsto \mathcal{A}_D^\infty$ is lower-semicontinuous with respect to the
$L^1$-convergence of sets. Indeed, if $D_n \to D$ in $L^1(\Omega)$ as $n \to + \infty$ and $\overline f\in \mathcal{F}$ is the maximizer of $\mathcal{A}_D^\infty$ given by Theorem \ref{Th max max}, then
$$\mathcal{A}_D^\infty=\mathcal{A}_{\overline f,D}^\infty=\lim_{n \to + \infty}\mathcal{A}_{\overline f,D_n}^\infty\leq \liminf_{n \to + \infty}\mathcal{A}_{D_n}^\infty\,.$$
 Once this noticed, the existence of a solution to problem \eqref{min max max} follows from the Direct Method in the Calculus of Variations for every class of admissible sets $\mathcal{D}$ that is compact with respect to the $L^1$-convergence of sets. In particular, this holds if $\mathcal D$ is contained in one of the classes of sets $\mathcal C, \mathcal T, \mathcal N$ and $\mathcal L$, introduced in Definition \ref{rinforzi ammissibili} and for which this compactness issue was established in \cite[Theorem 3.2]{berchio}.
\end{proof}

\subsection{Hints about the location of the best reinforcements}\label{Green}
We have proved that for $\mathcal{F}$ and $\mathcal{D}$ as given in Theorems \ref{Th max max} and \ref{minthm}, respectively, problem \eqref{min max max} admits a solution. Clearly, from an applied perspective, it would be interesting to apriori determine qualitative properties of the set(s) $D$ that attain the minimum (and, in turn, allow to prevent collisions with the obstacles). In this section we exploit the representation formula for the solution of the partially hinged plate problem \eqref{loadpb0weak} proven in \cite{BF2}, to provide some hints about the best location of the set $D$ within the plate, i.e., where to put the heavier (with density $\beta$) and, in turn, the lighter (with density $\alpha$) materials within the plate.

As in Section \ref{obst_general}, we denote by $G_p$ the Green function of the biharmonic operator on $\Omega$, under partially hinged boundary conditions. Then, the solution of \eqref{loadpb0weak} with $f\in L^2(\Omega)$ writes
	\begin{equation}\label{espl}
    u(x,y)= \int_{\Omega} G_p(x,y) f(p)\,dp \qquad \forall (x,y)\in\Omega \,. 
    \end{equation}
The Fourier expansion of $G_p$ was computed in \cite[Theorem 2.1]{BF2} and reads
	\begin{equation}\label{green3}
	G_p(x,y)=\sum_{m=1}^{+\infty} \dfrac{1}{2\pi}\dfrac{\phi_m(y,\eta)}{m^3}\sin(m\xi)\,\sin(mx) \qquad \forall p=(\xi,\eta) \text{ and }(x,y)\in\overline \Omega\,.
	\end{equation}
	For all $m\in \mathbb{N^+}$, the coefficients $\phi_m(y,\eta)$ are defined as follows
	 \begin{equation}\label{soluzdirac2}
		\begin{split}
		\phi_m(y,\eta):=
		e^{-ml}\bigg[&\cosh(m\eta)\bigg(\dfrac{\overline \zeta(my,ml)}{F(ml)}+ml\dfrac{\overline \psi(my,ml) }{F(ml)}-m \eta\dfrac{\overline \omega(my,ml)}{\overline F(ml)}\bigg)\\+&\sinh(m \eta)\bigg(\dfrac{\overline\vartheta(my,ml)}{\overline F(ml)}+ml\dfrac{\overline \omega(my,ml) }{\overline F(ml)}-m\eta\dfrac{\overline\psi(my,ml)}{F(ml)}\bigg)\bigg] 
		\\+&(1+m|y-\eta|)e^{-m|y-\eta|}
		\end{split}
		\end{equation}
		with $F, \overline F: (0,+\infty)\rightarrow (0,+\infty)$ and $\overline\zeta, \overline\vartheta, \overline\psi, \overline\omega: \mathbb{R}\times(0,+\infty)\rightarrow \mathbb{R}$ as follows
        \begin{equation*}
F(z):=\dfrac{(3+\sigma)}{2}\sinh(2z)-z (1-\sigma)\,, \quad  \overline F(z):=\dfrac{(3+\sigma)}{2}\sinh(2z)+z (1-\sigma)\,,
\end{equation*}  
	\begin{equation*}
	\begin{split}
	\overline\zeta(r,z):=&\bigg(\dfrac{4}{1-\sigma}-z(1+\sigma)\bigg)\cosh(r)\cosh(z)+\bigg(\dfrac{(1+\sigma)^2}{1-\sigma}+2z\bigg)\cosh(r)\sinh(z)\\&-2r\sinh(r)\cosh(z)+r(1+\sigma)\sinh(r)\sinh(z)\\
	\overline \vartheta(r,z):=&r(1+\sigma)\cosh(r)\cosh(z)-2r\cosh(r)\sinh(z)\\&+\bigg(\dfrac{(1+\sigma)^2}{1-\sigma}+2z\bigg)\sinh(r)\cosh(z)+\bigg(\dfrac{4}{1-\sigma}-z(1+\sigma)\bigg)\sinh(r)\sinh(z)\\
	\overline\psi(r,z):=&\big(2+(1-\sigma)z\big)\cosh(r)\cosh(z)+\big(-(1+\sigma)+z(1-\sigma)\big)\cosh(r)\sinh(z)\\&-r(1-\sigma)\sinh(r)\cosh(z)-r(1-\sigma)\sinh(r)\sinh(z)\\
	\overline \omega(r,z):=&-r(1-\sigma)\cosh(r)\cosh(z)-r(1-\sigma)\cosh(r)\sinh(z)\\&+\big(-(1+\sigma)+z(1-\sigma)\big)\sinh(r)\cosh(z)+\big(2+(1-\sigma)z\big)\sinh(r)\sinh(z)\,.
	\end{split}
	\end{equation*}
	
   By means of several delicate estimates, in \cite[Theorem 2.1]{BF2} it was proved that the $\phi_m$ are strictly positive and strictly decreasing with respect to $m$, i.e.,
\begin{equation}\label{decreasing}
		0<\phi_{m+1}(y,\eta)<\phi_m(y,\eta)\qquad\forall m\in\mathbb{N^+},\forall y,\eta\in[-l,l]\,.
		\end{equation}
    Exploiting the above information, in \cite[Theorem 2.2]{BF2} it was proved that the series defining $G_p$ is uniformly convergent in $\overline \Omega$ and, through iterative estimates, that $G_p$ is strictly positive in $(0,\pi)\times[-l,l]$. In turn, problem \eqref{loadpb0weak} satisfies the positivity preserving property \eqref{ppp}. 
    
    Now we focus on the modified energy $\mathbb{E}_2$ with $\mathcal{F}=\{f\in L^{\infty}(\Omega)\,:\,\|f\|_{\infty}\leq 1\}$ and we apriori assume that the plate does not touch the obstacles; we will come back to this assumption at the end. Then, the unique minimizer $u_{f,D}$ of $\mathbb{E}_2$ satisfies \eqref{espl} with $\left( \beta \chi_D+\alpha \chi_{D^c} \right)f$ instead of $f$. Recalling that $G_p$ is positive, for all $D\in \mathcal{D}$, we deduce that:
        \begin{equation}\label{upperb}
         \mathcal{A}^\infty_{D}\leq  \max_{f \in \mathcal{F}} \max_{(x,y) \in \overline \Omega} |u_{f,D}(x,y)|\leq \max_{(x,y) \in \overline \Omega} \left[\beta\int_{D} G_p(x,y)\,dp+ \alpha \int_{D^c} G_p(x,y) \,dp\right]. 
         \end{equation}
        The bound \eqref{upperb} suggests that one possible way to reduce $\mathcal{A}^\infty_{D}$, is to place the heavier and lighter materials where the map $p\mapsto G_p$ attains its minimum and maximum values, respectively. Due to the complexity of the expression of $G_p$, a complete analytical optimization of the right-hand side of \eqref{upperb} proves to be quite challenging. Nevertheless, preliminary insights can be readily obtained through elementary estimates.
      
By exploiting the elementary inequality $|\sin(m \xi)|<m\sin(\xi )$ for all $\xi \in(0,\pi)$ and all positive integer $m$, and \eqref{decreasing} we get that

$$ \mathcal{A}^\infty_{D}\leq 
 \dfrac{\pi}{12} \max_{(x,y) \in \overline \Omega}
\left[\beta\int_{D} \phi_1(y,\eta)\sin(\xi)\,d\eta \, d \xi+ \alpha \int_{D^c} \phi_1(y,\eta) \sin(\xi) \,d\eta \, d \xi\right]\,.
$$
According to the above estimate, the heavier material $\beta$ has to be located where $\sin(\xi)$ has the minimum value, namely near the short edges. Conversely, the lighter material $\alpha$ has to be located where $\sin(\xi)$ has maximum value, namely near the line $\xi=\frac{\pi}{2}$.  \\ The behavior of $\eta \mapsto \phi_1(y,\eta)$ is more delicate to investigate. Nevertheless, using the fact that $\overline F(l)>F(l)>0$ and the following estimates for all $y\in [-l,l]$: 
\begin{equation}\label{coeff-est}
|\overline\zeta(y,l)|\leq A(l,\sigma)\,, \quad 
|\overline \vartheta(y,l)|\leq A(l,\sigma)\,, \quad 
|\overline\psi(y,l)|\leq B (l,\sigma)\,, \quad 
|\overline \omega(y,l)|\leq B(l,\sigma)
	\end{equation}
 with $A(l,\sigma):=\bigg(\dfrac{4+(1+\sigma)^2}{1-\sigma}+2l(3+\sigma)\bigg)\cosh^2(l)$ and $B(l,\sigma):=\big(3+\sigma+4(1-\sigma)l\big)\cosh^2(l)$, we deduce that
\begin{equation}\label{g}
\phi_1(y,\eta) \leq  C(l,\sigma) (\cosh(\eta)+|\sinh(\eta)|)(1+|\eta|)+1=:g(\eta) \qquad \forall y,\eta\in[-l,l]\,,
\end{equation}
where  
$C(l,\sigma):=\dfrac{\cosh^2(l)}{e^{l} F(l)}\left[\dfrac{4+(1+\sigma)^2}{1-\sigma}+l(13-\sigma) \right]>0\,.$
 \\
Since the function $g$ defined in \eqref{g} is even and strictly increasing in $[0,l]$, the upper bound for $\mathcal{A}^\infty_{D}$ suggests that the heavier material $\beta$ should be located near the midline of the plate, while its maximum value is at $\eta=\pm l$, therefore the lighter material should be located near the long edges. Based on the observations made so far, we conclude that:
{\it
\begin{center}
in order to reduce $\mathcal{A}^\infty_{D}$ and thereby prevent collisions between the plate and the obstacles:
\begin{itemize}
\item the set $D$ should include the short edges and the midline of the plate; 
\item the set $D^c$ should include the line $\xi=\frac{\pi}{2}$ and the long edges of the plate.
\end{itemize}
\end{center}
}
In particular, the above suggestions should be followed when choosing the sets $D$ in the family $\mathcal D$ considered in Theorem \ref{minthm}. \par As already remarked, the above estimates hold under the assumption that the contact sets are empty. Recalling the  compatibility condition $|D|=|\Omega| \frac{1-\alpha}{\beta-\alpha}$, this assumption follows from the above estimates by assuming e.g., that the obstacles functions satisfy:
$$|\psi_{\pm}|>\frac{lg(l)\pi^2}{6} \quad \text{in }\overline{\Omega}\quad  \text{ with $g$ as in \eqref{g}}.$$
 Alternatively, let $z_{\beta}\in H^2_*(\Omega)$ be the unique solution to the partially hinged plate problem \eqref{loadpb0weak} with $f\equiv \beta$.  Since $\|\left( \beta \chi_D+\alpha \chi_{D^c} \right)f\|_{\infty}\leq \beta $ for all $f \in \mathcal{F}$ and $0<\alpha< \beta$, arguing as in the proof of Proposition \ref{empty}, we infer that the contact sets of problem \eqref{var2} (with obstacles $\psi_-$ and $\psi_+$) are empty as soon as $\psi_{\pm} \in C^0(\overline \Omega)$ satisfy $$|\psi_{\pm}|>z_{\beta}=\beta \int_{\Omega} G_p(x,y)\,dp \quad \text{in } \overline \Omega\,.$$\nc
\section{A worst-case optimization problem to improve stability} \label{artificial}

We start by introducing a suitable class of obstacles among which to search for the most effective ones to insert into the structure, in order to minimize the torsional response of the plate. More precisely, we assume that the partially hinged plate is forced to remain between two prescribed obstacles located in a suitable not empty, closed region $\Omega_{\text{O}} \subset \overline\Omega$. Therefore, from now on we slightly modify the notation introduced in Section \ref{obst_general} and we minimize the functional $\mathbb{E}$ given  in \eqref{energia} over the set
 \begin{equation}
    H^2_{*,\psi_{-},\psi_{+}}(\Omega):= \left\{  v \in H^2_*(\Omega) \ : \ \psi_{-} \le v \le \psi_{+} \ \text{in} \ \Omega_{\text{O}}\right\}, \notag
\end{equation}
where $\psi_{+}$ and $\psi_{-}$ belong, respectively, to the following sets:
\begin{equation}
    \text{O}_+(\gamma_+):= \left\{ \psi:\Omega_{\text{O}}\to \mathbb{R} \ : \ \psi \in \mathcal{C}^0(\Omega_{\text{O}}), \ \psi \ge \gamma_+ \ \text{in} \ \Omega_{\text{O}}\right\}\label{O+bis}
\end{equation}
and
\begin{equation}
    \text{O}_-(\gamma_-):= \left\{ \psi:\Omega_{\text{O}}\to \mathbb{R} \ : \ \psi \in \mathcal{C}^0(\Omega_{\text{O}}), \ \psi \le- \gamma_- \ \text{in} \ \Omega_{\text{O}} \right\},\label{O-bis}
\end{equation}
for some $\gamma_+, \gamma_- >0$.
Then, for a given $f \in (C^0(\overline{\Omega}))'$ the minimizer $u_{f,\psi_{-},\psi_{+}}\in H^2_{*,\psi_{-},\psi_{+}}(\Omega) $ of $\mathbb{E}$ satisfies the variational inequality
 \begin{equation}\label{var1bis}
     (u_{f,\psi_{-},\psi_{+}},\varphi-u_{f,\psi_{-},\psi_{+}})_{H^2_{*}(\Omega)} \ge \langle f, \varphi-u_{f,\psi_{-},\psi_{+}} \rangle \qquad \forall \varphi \in H^2_{*,\psi_{-},\psi_{+}}(\Omega)\,.
 \end{equation}
  In the case $\Omega_\text{O}$ is a curve, we refer to \eqref{var1bis} as a \textit{thin obstacle problem} (see, \cite{Schild} and \cite{Schild2}).  
If $\psi_{-} < u_{f,\psi_{-},\psi_{+}} < \psi_{+}$ in $\Omega_\text{O}$, as in the case of obstacles defined over the whole set $\overline \Omega$, by taking as test function in \eqref{var1bis}, $u_{f,\psi_{-},\psi_{+}}\pm \varepsilon \phi$ with $\phi \in H^2_{*}(\Omega)$ and $\varepsilon>0$ sufficiently small, it follows that $u_{f,\psi_{-},\psi_{+}}$ satisfies the partially hinged plate problem \eqref{loadpb0weak}.

 Recalling that $H^2_{*}(\Omega) \subset C^0(\overline \Omega)$, to $u_{f,\psi_{-},\psi_{+}} $ we may associate the \emph{gap function}:
$$\mathcal{G}_{f,\psi_{-},\psi_{+}}(x):= u_{f,\psi_{-},\psi_{+}} (x,l)-u_{f,\psi_{-},\psi_{+}} (x,-l) \quad x \in[0,\pi]$$
and the \emph{maximal gap}:
\begin{equation}
    \mathcal{G}^\infty_{f,\psi_{-},\psi_{+}}:= \max_{x \in [0,\pi]} |\mathcal{G}_{f,\psi_{-},\psi_{+}}(x)|. \label{gap2bis}
\end{equation}
As already recalled in the introduction, the gap function (originally introduced in \cite{berchio} for the partially hinged plate problem without obstacles) measures the difference on the vertical displacements on the two free edges of the plate, therefore the maximal gap provides a measure of its torsional response. This is the quantity one aims to keep under control, and possibly to minimize, in order to improve the stability of the structure. We observe that if we remove the condition $|\psi_{\pm}|\geq\gamma_{\pm}$ in the definition of $\text{O}_{\pm}(\gamma_\pm)$, then $\psi_{\pm}\equiv 0$ becomes admissible and $u_{f,0,0}\equiv 0$ is trivially the unique minimizer of the energy. Therefore,  $\mathcal{G}_{f,0,0}\equiv 0 \equiv \mathcal{G}^\infty_{f,0,0}$.

We are now in a position to formally state our optimization problem. Let $\mathcal{F}$ and $\Psi_{\pm}$ be suitable subsets of, respectively, $(C^0(\overline{\Omega}))'$ and $ \text{O}_{\pm}(\gamma_\pm)$. Given $(\psi_{-},\psi_{+})  \in  \Psi_{-} \times \Psi_{+}$, we first look for the \textit{worst} $f \in \mathcal{F}$ yielding the maximum of the maximal gap:
\begin{equation}\label{max max2bis}
    \mathcal{G}^\infty_{\psi_{-},\psi_{+}}:= \max_{f \in \mathcal{F}} \mathcal{G}^\infty_{f,\psi_{-},\psi_{+}} = \max_{f \in \mathcal{F}} \max_{x \in [0,\pi]} |\mathcal{G}_{f,\psi_{-},\psi_{+}}(x)|,
\end{equation}
and then the \textit{best} obstacles $(\psi_{-},\psi_{+})  \in  \Psi_{-} \times \Psi_{+}$ minimizing the torsional response of the plate, namely such that
\begin{align}\label{min max max2bis}
    \mathcal{G}^\infty:=& \min_{(\psi_{-},\psi_{+})  \in  \Psi_{-} \times \Psi_{+}}\mathcal{G}^\infty_{\psi_{-},\psi_{+}}
    \,.
\end{align}

In Section \ref{ex2} we provide some classes $\mathcal{F}$ and  $\Psi_{\pm}$ in which \eqref{max max2bis} and \eqref{min max max2bis} admit a solution, while in Section \ref{Green2} we provide some qualitative information about them.

\subsection{Existence results}\label{ex2}

We start by studying the well-posedness of $\mathcal{G}^\infty_{\psi_{-},\psi_{+}}$.
\begin{thm}\label{Th max maxbis}
Let $\gamma_{-}, \gamma_{+}>0$ and $(\psi_{-},\psi_{+}) \in \text{O}_-(\gamma_{-}) \times \text{O}_+(\gamma_{+})$. 
The problem
\begin{equation}\label{Pb2bis}
    \mathcal{G}^\infty_{\psi_{-},\psi_{+}}:= \max_{f \in \mathcal{F}} \mathcal{G}^\infty_{f,\psi_{-},\psi_{+}} \quad \text{with} \quad  \mathcal{F}:=\left\{f \in 
     (C^0(\overline{\Omega}))' 
     \ : \ \Vert f \Vert_{(C^0(\overline{\Omega}))' 
     } \leq 1  \right\}
\end{equation}
admits a solution. Furthermore, if $-\psi_-\equiv \psi_+\equiv \psi$, and $f$ is a maximizer of $\mathcal{G}^\infty_{-\psi,\psi}$, then $-f$ is a maximizer as well.
\end{thm}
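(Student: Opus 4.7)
The strategy mirrors that of Theorem~\ref{Th max max}: first, establish sequential continuity of the map $f \in (C^0(\overline{\Omega}))' \mapsto \mathcal{G}^\infty_{f,\psi_{-},\psi_{+}}\in[0,+\infty)$ when the source is endowed with the weak* topology, in the spirit of Proposition~\ref{cont}; then apply the direct method. Once continuity is proved, a maximizing sequence $\{f_n\}\subset\mathcal{F}$ is bounded in $(C^0(\overline{\Omega}))'$, hence admits a weak* cluster point $\bar f$; the lower semicontinuity of the dual norm ensures $\bar f\in\mathcal{F}$, and continuity yields $\mathcal{G}^\infty_{\bar f,\psi_{-},\psi_{+}}=\mathcal{G}^\infty_{\psi_{-},\psi_{+}}$.

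The continuity step is the substantive one, but it is simpler here than in Proposition~\ref{cont} because the admissible set $H^2_{*,\psi_{-},\psi_{+}}(\Omega)$ does not depend on $n$. Let $f_n\rightharpoonup^* f$ in $(C^0(\overline{\Omega}))'$ and denote $u_n:=u_{f_n,\psi_{-},\psi_{+}}$. Since $\psi_-\le -\gamma_- <0<\gamma_+\le \psi_+$ on $\Omega_{\text{O}}$, the zero function belongs to $H^2_{*,\psi_{-},\psi_{+}}(\Omega)$; testing \eqref{var1bis} with $\varphi=0$ and using coercivity of $(\cdot,\cdot)_{H^2_*(\Omega)}$ together with the uniform bound on $\|f_n\|_{(C^0(\overline{\Omega}))'}$, one obtains a uniform bound on $\|u_n\|_{H^2_*(\Omega)}$. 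Extract a subsequence with $u_n\rightharpoonup \bar u$ weakly in $H^2_*(\Omega)$; since $H^2_{*,\psi_{-},\psi_{+}}(\Omega)$ is closed and convex, $\bar u$ belongs to it. The compact embedding $H^2(\Omega)\hookrightarrow C^0(\overline{\Omega})$ gives $u_n\to \bar u$ uniformly on $\overline\Omega$, hence $\langle f_n,\varphi-u_n\rangle\to\langle f,\varphi-\bar u\rangle$ by weak* convergence of $f_n$. Passing to the limit in \eqref{var1bis} as in \eqref{eq02}, i.e.\ writing $(u_n,\varphi-u_n)_{H^2_*}=(u_n,\varphi)_{H^2_*}-\|u_n\|_{H^2_*}^2$ and using $\|\bar u\|_{H^2_*}\le \liminf_n\|u_n\|_{H^2_*}$, one identifies $\bar u$ as a solution of \eqref{var1bis} with datum $f$. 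Uniqueness of the minimizer forces $\bar u=u_{f,\psi_{-},\psi_{+}}$ and the convergence of the whole sequence; the uniform convergence $u_n\to u$ on $\overline\Omega$ then gives $\mathcal{G}_{f_n,\psi_{-},\psi_{+}}\to\mathcal{G}_{f,\psi_{-},\psi_{+}}$ uniformly on $[0,\pi]$ and hence $\mathcal{G}^\infty_{f_n,\psi_{-},\psi_{+}}\to\mathcal{G}^\infty_{f,\psi_{-},\psi_{+}}$.

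Finally, for the symmetry statement, observe that when $-\psi_-\equiv\psi_+\equiv\psi$, the admissible set $H^2_{*,-\psi,\psi}(\Omega)$ is invariant under $v\mapsto -v$. Consequently, if $u_{f,-\psi,\psi}$ satisfies \eqref{var1bis}, then $-u_{f,-\psi,\psi}\in H^2_{*,-\psi,\psi}(\Omega)$ satisfies the same inequality with $-f$ in place of $f$; by uniqueness $u_{-f,-\psi,\psi}=-u_{f,-\psi,\psi}$, whence $\mathcal{G}_{-f,-\psi,\psi}(x)=-\mathcal{G}_{f,-\psi,\psi}(x)$ for every $x\in[0,\pi]$ and therefore $\mathcal{G}^\infty_{-f,-\psi,\psi}=\mathcal{G}^\infty_{f,-\psi,\psi}$, proving the claim.

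The main obstacle is the continuity argument, specifically the passage to the limit in the variational inequality \eqref{var1bis}; the only delicate point is handling the quadratic term $\|u_n\|_{H^2_*}^2$, which is resolved by the standard lower-semicontinuity estimate, as in Proposition~\ref{cont}. Once this is in place, the direct-method conclusion and the symmetry claim are immediate.
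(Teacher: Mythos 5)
Your proposal is correct and follows essentially the same route as the paper: a continuity lemma for $f\mapsto \mathcal{G}^\infty_{f,\psi_-,\psi_+}$ under weak* convergence, the direct method with weak* compactness of the unit ball and lower semicontinuity of the dual norm, and the sign-flip argument $u_{-f,-\psi,\psi}=-u_{f,-\psi,\psi}$ for the symmetry claim. The only difference is that you prove continuity with the obstacles fixed, which indeed suffices here, whereas the paper's Proposition~\ref{contbis1} also lets the obstacles vary (via the rescaled test functions $\varphi_\varepsilon=K_\varepsilon\varphi$) because that generality is needed later in Theorem~\ref{min exists}.
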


\medskip
Before presenting the proof of the above statement, we show that the map $ \mathcal{G}^\infty_{f,\psi_{-},\psi_{+}}$ defined in \eqref{gap2bis} is continuous with respect to its arguments.

\begin{prop}\label{contbis1}
Let $\text{O}_{\pm}(\gamma_{\pm})$ be the classes defined in \eqref{O+bis} and \eqref{O-bis}, with $\gamma_{-}, \gamma_{+}>0$. Then the map $$(f,\psi_{-},\psi_{+}) \in (C^0(\overline{\Omega}))' \times \text{O}_-(\gamma_{-}) \times \text{O}_+(\gamma_{+}) \mapsto \mathcal{G}^\infty_{f,\psi_{-},\psi_{+}} \in [0,+\infty) $$ is sequentially continuous, when $(C^0(\overline{\Omega}))'$ is endowed with the weak* topology and $\text{O}_{\pm}(\gamma_{\pm})$ are endowed with the sup-norm topology.
\end{prop}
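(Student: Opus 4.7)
The plan is to mimic the strategy used in Proposition \ref{cont}, with the extra ingredient that the convex sets $H^2_{*,\psi_-^n,\psi_+^n}(\Omega)$ now depend on $n$ through the obstacles, so we will need a Mosco-type construction of admissible test functions. Take a sequence $(f_n,\psi_-^n,\psi_+^n)\to (f,\psi_-,\psi_+)$ in the stated topologies and set $u_n:=u_{f_n,\psi_-^n,\psi_+^n}$. I want to show $u_n\to \overline u$ in $C^0(\overline\Omega)$ and that $\overline u=u_{f,\psi_-,\psi_+}$; since the gap function is continuous with respect to the $C^0(\overline\Omega)$-norm of $u$, this at once gives $\mathcal G^\infty_{f_n,\psi_-^n,\psi_+^n}\to \mathcal G^\infty_{f,\psi_-,\psi_+}$.

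For the uniform bound, I would exploit that $0\in H^2_{*,\psi_-^n,\psi_+^n}(\Omega)$ (which follows from $\psi_-^n\le -\gamma_-<0<\gamma_+\le \psi_+^n$) and plug $\varphi=0$ into \eqref{var1bis}, getting $\|u_n\|_{H^2_*}^2\le \langle f_n,u_n\rangle$. Since weak* convergence in $(C^0(\overline\Omega))'$ implies $\|f_n\|_{(C^0)'}$ is bounded and the embedding $H^2_*(\Omega)\hookrightarrow C^0(\overline\Omega)$ is continuous, this yields a uniform bound on $\|u_n\|_{H^2_*}$, hence, up to subsequences, $u_n\rightharpoonup \overline u$ in $H^2_*(\Omega)$ and $u_n\to \overline u$ in $C^0(\overline\Omega)$. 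The uniform convergence of $\psi_\pm^n$ to $\psi_\pm$, combined with the pointwise inequality $\psi_-^n\le u_n\le\psi_+^n$ on $\Omega_{\text O}$, immediately gives $\overline u\in H^2_{*,\psi_-,\psi_+}(\Omega)$.

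The main obstacle is passing to the limit in \eqref{var1bis}: given $\varphi\in H^2_{*,\psi_-,\psi_+}(\Omega)$, I must supply a sequence $\varphi_n\in H^2_{*,\psi_-^n,\psi_+^n}(\Omega)$ with $\varphi_n\to \varphi$ strongly in $H^2_*(\Omega)$. Here is where the strict positivity assumptions $\gamma_\pm>0$ become essential: I set $\varphi_n:=(1-t_n)\varphi$ with $t_n:=\rho_n/\min\{\gamma_-,\gamma_+\}$ and $\rho_n:=\|\psi_+^n-\psi_+\|_\infty+\|\psi_-^n-\psi_-\|_\infty$. A direct case analysis, splitting according to the sign of $\varphi(x)$ and using $\psi_-\le\varphi\le\psi_+$ on $\Omega_{\text O}$, shows that for $n$ large $\psi_-^n\le (1-t_n)\varphi\le\psi_+^n$ pointwise on $\Omega_{\text O}$. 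Since $t_n\to 0$, $\varphi_n\to \varphi$ strongly in $H^2_*(\Omega)$. Testing \eqref{var1bis} written for $u_n$ with this $\varphi_n$, and combining weak lower semicontinuity of the norm, the weak convergence $u_n\rightharpoonup \overline u$ in $H^2_*(\Omega)$ (which implies $(u_n,\varphi_n)_{H^2_*}\to (\overline u,\varphi)_{H^2_*}$), and the convergence $\langle f_n,\varphi_n-u_n\rangle\to \langle f,\varphi-\overline u\rangle$ (guaranteed by $f_n\rightharpoonup^* f$ together with the uniform convergence of $\varphi_n-u_n$ to $\varphi-\overline u$), one obtains that $\overline u$ solves \eqref{var1bis} with data $(f,\psi_-,\psi_+)$. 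Uniqueness of the solution of this variational inequality forces $\overline u=u_{f,\psi_-,\psi_+}$, and a standard Urysohn-type argument upgrades the convergence from subsequences to the whole sequence. Finally, since $u_n\to \overline u$ uniformly on $\overline\Omega$, we get $\mathcal G_{f_n,\psi_-^n,\psi_+^n}\to \mathcal G_{f,\psi_-,\psi_+}$ uniformly on $[0,\pi]$, and hence their maxima converge, which is exactly the desired continuity.
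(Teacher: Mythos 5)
Your proof is correct and follows essentially the same strategy as the paper: a priori bound via the admissible test function $\varphi=0$, weak $H^2_*$ / uniform $C^0$ compactness, and admissible test functions obtained by rescaling $\varphi$ by a factor slightly less than $1$, which is exactly where the paper also uses $\psi_+\ge\gamma_+>0>-\gamma_-\ge\psi_-$. The only (immaterial) difference is that you build a single recovery sequence $\varphi_n=(1-t_n)\varphi\to\varphi$ and pass to one limit, whereas the paper fixes $K_\varepsilon\varphi$, lets $n\to+\infty$ first and then $\varepsilon\to 0$.
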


\proof
Let $\{(f_n,\psi_{-,n}, \psi_{+,n}) \}_n \subset (C^0(\overline{\Omega}))' \times \text{O}_-(\gamma_{-}) \times \text{O}_+(\gamma_{+})$ be such that $(f_n,\psi_{-,n},\psi_{+,n}) \to (f,\psi_{-},\psi_{+})$ as $n \to + \infty$, that is $f_n \rightharpoonup^* f$ in $ (C^0(\overline{\Omega}))'$ 
and $\Vert \psi_{\pm,n} - \psi_{\pm} \Vert_{L^\infty(\Omega_{\text{O}})} \to 0$ as $n \to + \infty$. For simplicity of notation, we denote by $u=u_{f,\psi_{-},\psi_{+}}$ and $u_n=u_{f_n,\psi_{-,n},\psi_{+,n} }$, for all $n \in \mathbb{N}$, the corresponding solutions of \eqref{var1bis}. In particular, for every $n \in \mathbb{N}$, it holds
 \begin{equation}\label{eq1.2}
     (u_n,\varphi-u_n)_{H^2_{*}(\Omega)}\ge \langle f_n, \varphi-u_n \rangle \qquad \forall \varphi \in  H^2_{*,\psi_{-,n},\psi_{+,n}}(\Omega).
 \end{equation}
 Since the zero constant function belongs to the class $H^2_{*,\psi_{-,n},\psi_{+,n}}(\Omega)$, we have
 $$(u_n,-u_n)_{H^2_{*}(\Omega)}\ge \langle f_n, -u_n \rangle $$ and we conclude there exists $C>0$ independent of $u_n$ such that
 $$\|u_n\|_{H^2_*(\Omega)}\leq  C\|f_n\|_{(C^0(\overline{\Omega}))'}\,.$$
Then, the weak$^*-$convergence of $f_n$ to $f$ in $(C^0(\overline{\Omega}))'$ gives that $\{ u_n \}_n$ is bounded in $H^2_*(\Omega)$, and hence there exists $\overline{u} \in H^2_*(\Omega)$ such that, up to a subsequence, $u_n \rightharpoonup \overline{u}$ weakly in $H^2_*(\Omega)$. Moreover, since
$$\psi_{-,n} \le u_n \le \psi_{+,n} \quad \text{in} \  \Omega_{\text{O}}$$
and
$$\psi_{-,n} \to \psi_{-}, \quad \psi_{-,n} \to \psi_{+} \quad \text{pointwise in} \ \Omega_{\text{O}}, \quad u_n \to \overline{u} \quad \text{pointwise in} \ \Omega \ \text{as} \ n \to + \infty,$$
 we have that $\overline{u} \in H^2_{*,\psi_{-}, \psi_{+}}(\Omega)$.

We claim that $\overline{u}$ satisfies the variational inequality
\begin{equation}
    (\overline{u},\varphi-\overline{u})_{H^2_{*}(\Omega)}\ge \langle f, \varphi-\overline{u} \rangle \qquad \forall \varphi \in  H^2_{*,\psi_{-},\psi_{+}}(\Omega). \label{varinbis}
\end{equation}
Given $0<\varepsilon< \frac{\gamma_++\gamma_-}{2}$, the uniform convergence of $\psi_{-,n}$ to $\psi_{-}$ and of $\psi_{+,n}$ to $\psi_{+}$ yields that there exists $n_0 \in \mathbb{N}$ such that for all $n > n_0$
\begin{equation}
    \psi_{-,n}< \psi_{-}+\varepsilon < \psi_{+}-\varepsilon < \psi_{+,n} \quad \text{in} \ \Omega_{\text{O}}. \label{unifbis}
\end{equation}
Then, for $0<\varepsilon< \min\Big\{\frac{\gamma_++\gamma_-}{2}, \min\{\min|\psi_+|, \min|\psi_-|\}\Big\}$ and for $\varphi \in  H^2_{*,\psi_{-},\psi_{+}}(\Omega)$ fixed, we define $\varphi_\varepsilon \in  H^2_{*,\psi_{-},\psi_{+}}(\Omega)$ as follows:
$$\varphi_\varepsilon:= K_\varepsilon \varphi \quad \text{with} \quad K_\varepsilon:=1-\frac{\varepsilon}{\min\{\min|\psi_+|, \min|\psi_-|\}}\in(0,1)\,.$$
It is readily seen that $ \varphi_\varepsilon \to \varphi$ strongly in $H^2_{*}(\Omega)$ as $\varepsilon \to 0$. Furthermore, by construction we have
$$\varphi_\varepsilon \leq \psi_+-\varepsilon \frac{ |\psi_+|}{\min\{\min|\psi_+|, \min|\psi_-|\}}\leq \psi_+-\varepsilon $$
and
$$\varphi_\varepsilon \geq \psi_-+ \varepsilon\frac{ |\psi_-|}{\min\{\min|\psi_+|, \min|\psi_-|\}}\geq \psi_-+ \varepsilon\,.$$
Hence, thanks to \eqref{unifbis}, we obtain

$$\varphi_\varepsilon \in \bigcap_{n > n_0} H^2_{*,\psi_{-,n},\psi_{+,n}}(\Omega)\,,$$
therefore, it can be used as a test function in \eqref{eq1.2} for all $n >n_0$, that is
\begin{equation}
     (u_n,\varphi_\varepsilon-u_n)_{H^2_{*}(\Omega)}\ge \langle f_n, \varphi_\varepsilon-u_n \rangle \qquad \forall n>n_0. \label{varin2bis}
 \end{equation}

Recalling that $u_n \rightharpoonup \overline{u}$ in $H^2_{*}(\Omega)$, hence $u_n \to \overline{u}$ in $C^0(\overline{\Omega})$ (from the compactness of the embedding $H^2(\Omega)\subset C^0(\overline{\Omega})$), exploiting the weak lower semicontinuity of the norm, and recalling that $f_n \rightharpoonup f$ weakly* in $(C^0(\overline{\Omega}))'$, we pass to the limit as $n \to +\infty$ in \eqref{varin2bis} getting
 \begin{equation}
    (\overline{u},\varphi_\varepsilon-\overline{u})_{H^2_{*}(\Omega)}\ge \langle f, \varphi_\varepsilon-\overline{u} \rangle . \label{varin3}
 \end{equation}
 Then, passing to the limit as $\varepsilon \to 0$ (since $ \varphi_\varepsilon \to \varphi$ strongly in $H^2_{*}(\Omega)$), from the arbitrariness of $\varphi \in  H^2_{*,\psi_{-},\psi_{+}}(\Omega)$, \eqref{varin3} proves \eqref{varinbis}. Therefore, we can conclude that $u = \overline{u}$ in $\Omega$, by the uniqueness of solution.

Finally, from the fact that $u_n \to u$ in $C^0(\overline{\Omega})$, we conclude that $\mathcal{G}_{f_n,\psi_{-,n},\psi_{+,n}} \to \mathcal{G}_{f,\psi_{-},\psi_{+}}$ uniformly as $n \to + \infty$ in $[0,\pi]$, i.e.\ $\mathcal{G}^\infty_{f_n,\psi_{-,n},\psi_{+,n}} \to \mathcal{G}^\infty_{f,\psi_{-},\psi_{+}}$ as $n \to + \infty$.
\endproof

\proof [Proof of Theorem \ref{Th max maxbis}]
For $(\psi_{-},\psi_{+}) \in \text{O}_-(\gamma_{-}) \times \text{O}_+(\gamma_{+})$ fixed, let $\{ f_n \}_n \subset (C^0(\overline{\Omega}))'$
be a maximizing sequence for \eqref{Pb2bis} such that $\|f_n\|_{
  (C^0(\overline{\Omega}))'} \leq 1$. Since $\{ f_n \}_n$ is bounded in $ 
  (C^0(\overline{\Omega}))' 
 $, there exists $\overline{f} \in  
  (C^0(\overline{\Omega}))' 
 $ such that, up to a subsequence, $f_n \rightharpoonup \overline{f}$ in $
  (C^0(\overline{\Omega}))' $.  Then, by Proposition \ref{contbis1}, we get
$$ \mathcal{G}^\infty_{\overline{f},\psi_{-},\psi_{+}}=\mathcal{G}^\infty_{\psi_{-},\psi_{+}}.$$
 On the other hand, by weak lower semicontinuity of the norm, we have
$$ \Vert \overline{f} \Vert_{(C^0(\overline{\Omega}))'} \le \liminf_{n \to + \infty} \Vert f_n \Vert_{(C^0(\overline{\Omega}))'}\leq 1.$$
Hence, $\overline{f} \in \mathcal{F}$ and solves problem \eqref{Pb2bis}. Finally, the proof of the last part of the statement follows arguing as in the proof of Theorem \ref{Th max max}. 
\endproof

We now prove that $  \mathcal{G}^\infty$ is well defined if the minimum in \eqref{min max max2bis} is taken over a suitable class of obstacles $\Psi_{\pm} \subset \text{O}_{\pm}(\gamma_\pm)$ given in the following:
\begin{thm}\label{min exists} 
    Let $\kappa_{\pm}\geq \gamma_{\pm}>0$.   Then, the problem
\begin{align}\label{minpbbis}
    \mathcal{G}^\infty:= \min_{(\psi_{-},\psi_{+})  \in  \Psi_{-} \times \Psi_{+}}\mathcal{G}^\infty_{\psi_{-},\psi_{+}}
    \end{align}
    with $\Psi_{\pm}=\Psi_{\pm}(\gamma_{\pm}, k_{\pm}, \Omega_{\text{\emph {O}}})$ defined as follows
   \begin{equation}\label{PSI}
   \Psi_{\pm}:=\left\{ \psi_{\pm}\in \text{\emph{O}}_{\pm}(\gamma_{\pm})\cap C^{0,\alpha}(
   \Omega_{\text{\emph{O}}})  \text{ for some } 0<\alpha <1 \,:\, \|\psi_{\pm}\|_
   {C^{0,\alpha} (\Omega_{\text{\emph{O}}})
   }
   \leq \kappa_{\pm} 
   \right\}
   \end{equation}
    admits a solution.\footnote{Here, we used the customary notation $ $$\|v \|_{C^{0,\alpha} (\Omega_{\text{\emph{O}}})} := \Vert v \Vert_{L^\infty(\Omega_{\text{\emph{O}}})}+ \sup_{x,y \in \Omega_{\text{\emph{O}}}, x \neq y} \frac{|v(x)-v(y)|}{|x-y|^{\alpha}} $. } 
\end{thm}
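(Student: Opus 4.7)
The plan is to argue by the Direct Method of the Calculus of Variations. Fix a minimizing sequence $\{(\psi_{-,n},\psi_{+,n})\}_n \subset \Psi_-\times \Psi_+$ for \eqref{minpbbis}. The first task is to extract a subsequence converging to a candidate minimizer lying in the admissible class. Since every element of $\Psi_\pm$ satisfies the uniform bound $\|\psi_{\pm,n}\|_{C^{0,\alpha}(\Omega_{\text{O}})}\le \kappa_\pm$ with a common exponent $\alpha$, the two sequences are equibounded and equicontinuous on the compact set $\Omega_{\text{O}}$. Ascoli--Arzel\`a then provides $\overline{\psi}_\pm\in C^0(\Omega_{\text{O}})$ such that, up to a subsequence, $\psi_{\pm,n}\to\overline{\psi}_\pm$ uniformly on $\Omega_{\text{O}}$.

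Next I would verify that $(\overline{\psi}_-,\overline{\psi}_+)\in \Psi_-\times \Psi_+$. The conditions $\overline{\psi}_+\ge\gamma_+$ and $\overline{\psi}_-\le -\gamma_-$ as well as continuity are preserved under uniform limits, while passing to the limit in $|\psi_{\pm,n}(x)-\psi_{\pm,n}(y)|\le \kappa_\pm |x-y|^{\alpha}$ for every $x\neq y$ shows that $\overline{\psi}_\pm$ inherits the same H\"older bound, and uniform convergence also preserves the $L^\infty$-bound; hence $\overline{\psi}_\pm\in \Psi_\pm$.

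The core step is to establish the lower semicontinuity
$$\mathcal{G}^\infty_{\overline{\psi}_-,\overline{\psi}_+}\le \liminf_{n\to+\infty} \mathcal{G}^\infty_{\psi_{-,n},\psi_{+,n}}.$$
For this, I would fix an arbitrary $f\in\mathcal{F}$ and apply Proposition \ref{contbis1} to the constant sequence $f_n\equiv f$, obtaining $\mathcal{G}^\infty_{f,\psi_{-,n},\psi_{+,n}}\to \mathcal{G}^\infty_{f,\overline{\psi}_-,\overline{\psi}_+}$ as $n\to+\infty$. Since by definition $\mathcal{G}^\infty_{f,\psi_{-,n},\psi_{+,n}}\le \mathcal{G}^\infty_{\psi_{-,n},\psi_{+,n}}$ for every $n$, passing to the limit on the left and the $\liminf$ on the right yields
$$\mathcal{G}^\infty_{f,\overline{\psi}_-,\overline{\psi}_+}\le \liminf_n \mathcal{G}^\infty_{\psi_{-,n},\psi_{+,n}},$$
and taking the supremum over $f\in\mathcal{F}$, which is actually attained as a maximum by Theorem \ref{Th max maxbis}, produces the desired inequality. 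Combining the three steps, $(\overline{\psi}_-,\overline{\psi}_+)$ is an admissible pair attaining the infimum in \eqref{minpbbis}, hence a solution.

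The main obstacle I anticipate is precisely this last lower-semicontinuity step: one cannot hope to apply Proposition \ref{contbis1} directly to the sequence of worst forces $\overline{f}_n$ associated with $(\psi_{-,n},\psi_{+,n})$ by Theorem \ref{Th max maxbis}, because a joint passage to the limit in $(f,\psi_-,\psi_+)$ would only deliver an identity at some weak$^*$-limit of the $\overline{f}_n$, which is not guaranteed to be a maximizer for the limiting obstacles. Testing against an arbitrary fixed $f$ before taking the supremum decouples the two variables and avoids the need for a min--max exchange argument, which is the subtle point of the proof.
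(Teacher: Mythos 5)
Your proposal is correct and follows essentially the same route as the paper: Ascoli--Arzel\`a compactness of the minimizing sequence of obstacles, stability of the admissible class under uniform limits, and lower semicontinuity of $(\psi_-,\psi_+)\mapsto\mathcal{G}^\infty_{\psi_-,\psi_+}$ obtained by applying Proposition \ref{contbis1} with a \emph{fixed} force and a varying obstacle sequence. The only cosmetic difference is that the paper tests directly against the maximizer $\overline{f}$ of $\mathcal{G}^\infty_{\overline{\psi}_-,\overline{\psi}_+}$ furnished by Theorem \ref{Th max maxbis}, whereas you test against every $f\in\mathcal{F}$ and then take the supremum --- two trivially equivalent ways of decoupling the force from the obstacle, which is exactly the subtlety you correctly identified.
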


\proof
Let $\{(\psi_{-,n},\psi_{+,n}) \}_n\subset \Psi_{-}\times \Psi_{+} $ be a minimizing sequence for problem \eqref{minpbbis}. Then, by Ascoli-Arzelà theorem, there exist $\overline{\psi}_{\pm}  \in \mathcal{C}^{0,\alpha}(\Omega_{\text{O}}) $ such that, up to a subsequence, $\psi_{\pm,n} \to \overline{\psi}_{\pm}$ in $\mathcal{C}^0(\Omega_{\text{O}})$. Moreover, it holds $\Vert \overline{\psi}_{\pm} \Vert_{\mathcal{C}^{0,\alpha}(\Omega_{\text{O}})} \le \kappa_{\pm} $, $\overline{\psi}_-\leq -\gamma_{-}$ and $\overline{\psi}_+\geq \gamma_{+}$. Hence, $\overline{\psi}_{\pm}  \in \Psi_{\pm} $. Furthermore, by Proposition \ref{contbis1}, arguing as in the proof of Theorem \ref{minthm}, it follows that the functional $ (\psi_-,\psi_+)\in \Psi_{-}\times \Psi_{+}\mapsto \mathcal{G}_{\psi_{-},\psi_{+}}^\infty$ is lower-semicontinuous with respect to the $C^0$-convergence. Then, the existence of a solution to problem \eqref{minpbbis} follows from the Direct Method in the Calculus of Variations.
\endproof

\subsection{Qualitative properties of worst forces and best obstacles}\label{Green2}
Theorems \ref{Th max maxbis} and \ref{min exists} provide suitable classes $\mathcal{F}$ and $\Psi_{\pm}$ such that problems \eqref{max max2bis} and \eqref{min max max2bis} admit a solution, namely under which worst forces and best obstacles exist. The aim of this section is to provide some qualitative information about them.

\subsubsection{Symmetry properties}

We first observe that the solutions to \eqref{var1bis} inherit the symmetry properties of the datum $f$ assuming suitable symmetry in the obstacle functions (and their domain $\Omega_{\text{O}}$).
To this aim we introduce the
subspaces of even and odd functions with respect to $y$:

$$C^0_\mathcal{E}(\overline{\Omega}):=\{u\in C^0(\overline{\Omega}): u(x,-y)=u(x,y)\quad\forall (x,y)\in \overline{\Omega}
\},$$

$$C^0_\mathcal{O}(\overline{\Omega}):=\{u\in C^0(\overline{\Omega}): u(x,-y)=-u(x,y)\quad\forall (x,y)\in \overline{\Omega} \}\,.$$
There holds
\begin{equation}\label{somma diretta}
C^0(\overline{\Omega})=C^0_\mathcal{E}(\overline{\Omega})\oplus C^0_\mathcal{O}(\overline{\Omega})
\end{equation}
and according to this decomposition we denote the components $u^{{e}}\in C^0_\mathcal{E}(\overline{\Omega})$ and $u^{{o}}\in C^0_\mathcal{O}(\overline{\Omega})$, i.e.
$$u^{{e}}(x,y)=\frac{u(x,y)+u(x,-y)}{2}, \quad u^{{o}}(x,y)=\frac{u(x,y)-u(x,-y)}{2}$$
and  the projections
$$\mathcal{P}_{\mathcal{E}}:u\in C^0(\overline{\Omega})\rightarrow u^{{e}}\in C^0_{\mathcal{E}}(\overline{\Omega})\,, \quad \mathcal{P}_{\mathcal{O}}:u\in C^0(\overline{\Omega})\rightarrow u^{{o}}\in C^0_{\mathcal{O}}(\overline{\Omega}).$$
We observe that
\begin{equation}\label{ortogonal}
\left(u^e,u^o\right)_{H^2_*(\Omega)}=0\quad \hbox{ for all } u\in H^2_*(\Omega)\,.
\end{equation}
Finally, we define
$$(C^0(\overline{\Omega}))'_{\mathcal{E}}=\{f\in (C^0(\overline{\Omega}))': \langle f,u \rangle=0 \quad \forall u\in C^0_\mathcal{O}(\overline{\Omega}) \}$$
and
$$(C^0(\overline{\Omega}))'_{\mathcal{O}}=\{f\in (C^0(\overline{\Omega}))': \langle f,u \rangle=0 \quad \forall u\in C^0_\mathcal{E}(\overline{\Omega}) \}.$$
We get that $(C^0(\overline{\Omega}))'=(C^0(\overline{\Omega}))'_{\mathcal{E}}\oplus (C^0(\overline{\Omega}))'_{\mathcal{O}}$ and for every $f\in (C^0(\overline{\Omega}))'$ we can write $f=f^{{e}}+f^{{o}}$, where $f^{{e}}:=f\circ \mathcal{P}_{\mathcal{E}}\in (C^0(\overline{\Omega}))'_{\mathcal{E}}$ and $f^{{o}}:=f\circ \mathcal{P}_{\mathcal{O}}\in (C^0(\overline{\Omega}))'_{\mathcal{O}}$.

As usual, we endow the dual space with the norm $\|f\|_{(C^0(\overline{\Omega}))'}=\sup_{\|v\|_{C^0(\overline{\Omega})}=1} |\langle f,v \rangle|$.
By the very definition of dual norm, we observe that the following inequality is satisfied:
\begin{equation}\label{noma e o}
\max\left\{\|f^e\|_{(C^0(\overline{\Omega}))'},\|f^o\|_{(C^0(\overline{\Omega}))'}\right\} \le \|f\|_{(C^0(\overline{\Omega}))'}\,.
\end{equation}


For the sake of notation, in the following, when $\psi_{+}\equiv -\psi_{-}\equiv \psi$, we shall write $H^2_{*,\psi}$,
$\mathcal{G}_{f,\psi}$, $\mathcal{G}^\infty_{f,\psi}$, $\mathcal{G}^\infty_{\psi}$ instead of $H^2_{*,-\psi, \psi}$, $\mathcal{G}_{f,-\psi,\psi}$, $\mathcal{G}^\infty_{f,-\psi,\psi}$, $\mathcal{G}^\infty_{-\psi,\psi}$, respectively.

\begin{lem}\label{lemsymbis}
    Let $f\in (C^0(\overline{\Omega}))'$, $\psi_{\pm} \in \text{O}_{\pm}(\gamma{_\pm})$ for some $\gamma{_\pm}>0$ and assume that $\Omega_{\text{\emph{O}}}$ is symmetric with respect to the $x$-axis, namely $(x,y)\in \Omega_{\text{\emph{O}}}$ if and only if $(x,-y)\in \Omega_{\text{\emph{O}}}$. Furthermore, let $u=u_{f,\psi_{-},\psi_{+}} \in H^2_{*,\psi_{-},\psi_{+}}(\Omega)$ satisfy the corresponding variational inequality \eqref{var1bis}. The following implications hold:
    \begin{itemize}
        \item[(i)] if $\psi_{-}$ and $\psi_{+}$ are even in $y$ and $f^o=0$, then $u^o \equiv 0$ in $\overline \Omega$;
        \item[(ii)] if $\psi_{-}=-\psi_{+}=\psi$, with $\psi$ even in $y$ and $f^e=0$, then $u^e\equiv 0$ in $\overline \Omega$.
    \end{itemize}
\end{lem}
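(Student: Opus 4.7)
My plan is to exploit the reflection isometry and uniqueness for the variational inequality \eqref{var1bis}. Define the reflection operator $R : v(x,y)\mapsto v(x,-y)$. A direct change of variables in the bilinear form
$$(u,v)_{H^2_*(\Omega)}=\int_\Omega\bigl(\Delta u\,\Delta v+(1-\sigma)(2u_{xy}v_{xy}-u_{xx}v_{yy}-u_{yy}v_{xx})\bigr)\,dx\,dy,$$
together with the observations $(Ru)_{xy}=-R(u_{xy})$, $(Ru)_{xx}=R(u_{xx})$, $(Ru)_{yy}=R(u_{yy})$, shows that $R$ is a linear isometry of $H^2_*(\Omega)$ and satisfies $(Ru,Rv)_{H^2_*(\Omega)}=(u,v)_{H^2_*(\Omega)}$. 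The strategy is then: in case (i) produce the competitor $\widetilde u:=Ru$, in case (ii) the competitor $\widetilde u:=-Ru$, show each belongs to $H^2_{*,\psi_-,\psi_+}(\Omega)$ and satisfies the same variational inequality \eqref{var1bis}, and invoke uniqueness to get $\widetilde u=u$, which is exactly $u^o\equiv0$ or $u^e\equiv 0$, respectively.

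For case (i), the symmetry of $\Omega_{\text{O}}$ together with the evenness of $\psi_\pm$ yields that $Rv\in H^2_{*,\psi_-,\psi_+}(\Omega)$ whenever $v$ is. To check that $Ru$ satisfies \eqref{var1bis}, pick any admissible test $\varphi$ and test the original inequality for $u$ with the admissible competitor $R\varphi$, obtaining $(u,R\varphi-u)_{H^2_*(\Omega)}\ge\langle f,R\varphi-u\rangle$. Applying the isometry,
$$(Ru,\varphi-Ru)_{H^2_*(\Omega)}=(u,R\varphi-u)_{H^2_*(\Omega)}\ge\langle f,R\varphi-u\rangle.$$
The final step is to show $\langle f,R\varphi-u\rangle=\langle f,\varphi-Ru\rangle$: a direct calculation gives $(R\varphi-u)^e=\varphi^e-u^e=(\varphi-Ru)^e$, and since $f^o=0$ we have $\langle f,v\rangle=\langle f,v^e\rangle$ for every $v$, so the two pairings coincide. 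Hence $Ru$ solves \eqref{var1bis} and uniqueness yields $u=Ru$.

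For case (ii), admissibility of $-Rv$ follows from $-\psi\le v\le\psi$ on $\Omega_{\text{O}}$ together with evenness of $\psi$ and symmetry of $\Omega_{\text{O}}$. For any admissible $\varphi$, the function $-R\varphi$ is admissible as well, so testing \eqref{var1bis} for $u$ with $-R\varphi$ and using the isometry gives
$$(-Ru,\varphi-(-Ru))_{H^2_*(\Omega)}=(u,-R\varphi-u)_{H^2_*(\Omega)}\ge\langle f,-R\varphi-u\rangle.$$
The key point — and the only mildly delicate step — is that the difference
$$\bigl(-R\varphi-u\bigr)-\bigl(\varphi-(-Ru)\bigr)=-(R\varphi+\varphi)-(u+Ru)=-2\varphi^e-2u^e$$
is an even function; since by hypothesis $f^e=0$, the duality pairing against it vanishes, and therefore $\langle f,-R\varphi-u\rangle=\langle f,\varphi-(-Ru)\rangle$. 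Thus $-Ru$ also solves \eqref{var1bis}, and uniqueness forces $u=-Ru$, i.e.\ $u^e\equiv 0$ on $\overline\Omega$. The whole proof is morally a reflection-plus-uniqueness argument; the only care needed is the bookkeeping that the parity assumptions on $f$ and on $\psi_\pm$ match the sign of the reflection used in each case.
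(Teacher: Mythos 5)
Your proof is correct, but it follows a genuinely different route from the paper's. The paper decomposes the variational inequality \eqref{var1bis} along $C^0_\mathcal{E}\oplus C^0_\mathcal{O}$ into $(u^e,\varphi^e-u^e)_{H^2_*(\Omega)}+(u^o,\varphi^o-u^o)_{H^2_*(\Omega)}\ge\langle f^e,\varphi^e-u^e\rangle+\langle f^o,\varphi^o-u^o\rangle$, checks via \eqref{even_obst} that $u^e$ (resp.\ $u^o$) is itself admissible, and tests with $\varphi=u^e$ (resp.\ $\varphi=u^o$) to get $\Vert u^o\Vert^2_{H^2_*(\Omega)}\le 0$ (resp.\ $\Vert u^e\Vert^2_{H^2_*(\Omega)}\le 0$) directly, using the orthogonality \eqref{ortogonal}. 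You instead build the reflected competitor $Ru$ (resp.\ $-Ru$), verify it solves the same variational inequality, and invoke uniqueness. The two arguments rest on the same structural fact --- invariance of the bilinear form under $y\mapsto -y$, which is what makes \eqref{ortogonal} true --- but your symmetrization-plus-uniqueness scheme is more portable (it works for any problem whose data are invariant under an isometric involution and which has a unique solution, without needing an orthogonal even/odd splitting of the form), while the paper's single well-chosen test function is slightly shorter and yields the quantitative conclusion $\Vert u^o\Vert_{H^2_*(\Omega)}=0$ in one line. Your bookkeeping of admissibility of $R\varphi$, $Ru$, $-R\varphi$, $-Ru$ and of the duality pairings under the parity hypotheses on $f$ is accurate, so the proof stands as written.
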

\proof
We first notice that, by exploiting the definition of even and odd parts, one has
\begin{equation}\label{even_obst}
\varphi \in H^2_{*,\psi_{-},\psi_{+}}(\Omega) \Rightarrow 
\psi_{-}^e\leq \varphi^e \leq \psi_{+}^e \quad \text{ and } \quad \varphi \in H^2_{*,\psi}(\Omega) \Rightarrow -\psi^e \leq \varphi^e,\varphi^o \leq \psi^e   \,.
\end{equation}
Assume that $u \in H^2_{*,\psi_{-},\psi_{+}}(\Omega)$ is the solution to the variational inequality \eqref{var1bis}. We shall prove that if $f^o=0$, then $u^o=0$.
Using the decomposition  \eqref{somma diretta}, inequality \eqref{var1bis} can be written in the following way
\begin{align}
    &(u^e,\varphi^e-u^e)_{H^2_{*}(\Omega)} +(u^o,\varphi^o-u^o)_{H^2_{*}(\Omega)} \ge \langle f^e, \varphi^e-u^e \rangle +  \langle f^o, \varphi^o-u^o \rangle, \label{vardbis}
\end{align}
for every $\varphi \in H^2_{*, \psi_-,\psi_+}(\Omega)$.
 Now, since $\psi_{-}$ and $\psi_{+}$ are even in $y$, by the first implication in \eqref{even_obst}, we can test \eqref{vardbis} with $\varphi=u^e$, then using \eqref{ortogonal} and the assumption $f^o=0$, we get
$$\Vert u^o \Vert^2_{H^2_*(\Omega)} \le 0.$$
Hence, $u^o=0$.
The second assertion follows in a similar way by testing \eqref{vardbis} with $\varphi=u^o$, in view of  the second implication in \eqref{even_obst}.
\endproof

Next, we consider the symmetry with respect to the line $x=\frac{\pi}{2}$.
\begin{lem}
 Let $f\in L^p(\Omega)$ for some $p\in[1,+\infty]$, $\psi_{\pm} \in \text{O}_{\pm}(\gamma{_\pm})$ for some $\gamma{_\pm}>0$ and assume that $\Omega_{\text{\emph{O}}}$ is symmetric with respect to the line $x=\frac{\pi}{2}$. Furthermore, let $u=u_{f,\psi_{-},\psi_{+}} \in H^2_{*,\psi_{-},\psi_{+}}(\Omega)$ satisfy the corresponding variational inequality \eqref{var1bis}. If $\psi_{\pm}(x,y)=\psi_{\pm}(\pi-x,y)$ for all $(x,y) \in \Omega_{\text{\emph{O}}}$ and $f(x,y)=f(\pi-x,y)$ for all $(x,y) \in \Omega$, then  $$u(x,y)=u(\pi-x,y)\quad  \text{ for all } (x,y) \in \overline{\Omega}\,.$$
\end{lem}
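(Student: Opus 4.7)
The natural strategy is a reflection argument: define $\tilde{u}(x,y) := u(\pi-x,y)$ and show that $\tilde{u}$ solves the same variational inequality \eqref{var1bis} as $u$, so that uniqueness forces $u \equiv \tilde u$.

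First I would check that the reflection $R:\varphi \mapsto \tilde\varphi$, with $\tilde\varphi(x,y) := \varphi(\pi-x,y)$, is a well-defined linear bijection of $H^2_{*,\psi_-,\psi_+}(\Omega)$ onto itself. The vanishing condition on $\{0,\pi\}\times(-l,l)$ is preserved because the set is invariant under $x \mapsto \pi-x$. For the obstacle condition, if $(x,y)\in \Omega_{\text{O}}$ then by symmetry of $\Omega_{\text{O}}$ one has $(\pi-x,y)\in \Omega_{\text{O}}$, and the hypothesis $\psi_\pm(x,y) = \psi_\pm(\pi-x,y)$ yields
\[
\psi_-(x,y) = \psi_-(\pi-x,y)\leq u(\pi-x,y) = \tilde u(x,y) \leq \psi_+(\pi-x,y) = \psi_+(x,y).
\]
In particular $\tilde u \in H^2_{*,\psi_-,\psi_+}(\Omega)$.

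Next I would verify two invariance identities, both consequences of the change of variable $x'=\pi-x$, using $\tilde\varphi_{xx}(x,y) = \varphi_{xx}(\pi-x,y)$, $\tilde\varphi_{yy}(x,y) = \varphi_{yy}(\pi-x,y)$, and $\tilde\varphi_{xy}(x,y) = -\varphi_{xy}(\pi-x,y)$ (so the cross terms $u_{xy}v_{xy}$ are preserved):
\[
(\tilde u,\tilde\varphi)_{H^2_*(\Omega)} = (u,\varphi)_{H^2_*(\Omega)}, \qquad \int_\Omega f\,\tilde\varphi\,dx\,dy = \int_\Omega f\,\varphi\,dx\,dy,
\]
the latter using $f(x,y) = f(\pi-x,y)$. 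Rewriting, we also get $(u,\tilde\varphi)_{H^2_*(\Omega)} = (\tilde u,\varphi)_{H^2_*(\Omega)}$ and $\int_\Omega f u = \int_\Omega f \tilde u$.

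With these identities in hand, I would test \eqref{var1bis} (satisfied by $u$) against $\tilde\varphi \in H^2_{*,\psi_-,\psi_+}(\Omega)$, where $\varphi$ is arbitrary in $H^2_{*,\psi_-,\psi_+}(\Omega)$:
\[
(u,\tilde\varphi - u)_{H^2_*(\Omega)} \geq \int_\Omega f(\tilde\varphi - u)\,dx\,dy.
\]
Applying the invariance identities to both sides converts this into
\[
(\tilde u,\varphi - \tilde u)_{H^2_*(\Omega)} \geq \int_\Omega f(\varphi - \tilde u)\,dx\,dy \qquad \forall \varphi \in H^2_{*,\psi_-,\psi_+}(\Omega),
\]
so $\tilde u$ satisfies the same variational inequality \eqref{var1bis}. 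By uniqueness of the minimizer of $\mathbb E$ on the convex set $H^2_{*,\psi_-,\psi_+}(\Omega)$, we conclude $u = \tilde u$ in $\overline\Omega$, i.e.\ $u(x,y) = u(\pi-x,y)$. There is no serious obstacle here beyond carefully tracking the signs under the reflection (crucially $\tilde u_{xy} = -u_{xy}(\pi-x,\cdot)$, which squares away in the cross term), and verifying that the admissible class $H^2_{*,\psi_-,\psi_+}(\Omega)$ is invariant under $R$.
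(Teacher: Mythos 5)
Your proof is correct and follows essentially the same route as the paper: reflect $u$ and the test functions across $x=\tfrac{\pi}{2}$, check that the admissible set, the bilinear form and the load term are invariant under this reflection, and conclude by uniqueness of the solution to the variational inequality. Your version is simply more explicit about the sign bookkeeping for the second derivatives, which the paper leaves implicit in its change of variables.
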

\proof
If $f\in L^p(\Omega)$, inequality \eqref{var1bis} with $u=u_{f,\psi_{-},\psi_{+}} \in H^2_{*,\psi_{-},\psi_{+}}(\Omega)$ writes
 \begin{equation*}
     (u,\varphi-u)_{H^2_{*}(\Omega)} \ge \int_{\Omega}f(\varphi-u)\,dx dy \qquad \forall \varphi \in H^2_{*,\psi_{-},\psi_{+}}(\Omega)\,.
 \end{equation*}
 Set $w(x,y)=u(\pi-x,y)$ and $\zeta(x,y)=\varphi(\pi-x,y)$ , for all $(x,y) \in \Omega$. In view of the symmetry assumptions on $\psi_{\pm}$ and $\Omega_{\text{O}}$, we have $w,\zeta \in  H^2_{*,\psi_{-},\psi_{+}}(\Omega)$. Then, by changing the variable in the above inequality and recalling the symmetry assumption of $f$, we readily get
  \begin{equation*}
     (w,\zeta-u)_{H^2_{*}(\Omega)} \ge \int_{\Omega}f(\zeta-w)\,dx dy \qquad \forall \zeta \in H^2_{*,\psi_{-},\psi_{+}}(\Omega)\,.
 \end{equation*}
 By uniqueness, $u\equiv w$ in $\Omega$ and the proof is complete.
\endproof

We are in a position to prove that the worst force(s) $f$  (whose existence is ensured by Theorem \ref{Th max maxbis} in the set $\mathcal{F}$) must have a nontrivial odd part when the obstacle functions are even in $y$.
 
\begin{prop}\label{symprop}
Assume that $\Omega_{\text{\emph{O}}}$ is symmetric with respect to the $x$-axis and that $\psi_{+}\equiv -\psi_{-}\equiv \psi$ with $\psi\in \text{O}_{+}(\gamma)$ for some $\gamma>0$, even in $y$. If $f \in (C^0(\overline{\Omega}))' $ is the corresponding maximizer of $\mathcal{G}^\infty_{\psi}$ as defined in \eqref{Pb2bis}, then $f^o\neq 0.$ Moreover, if the contact sets $\Omega_-$ and $\Omega_+$ as defined in \eqref{sets} are both empty, then $f^o$ is a maximizer.
\end{prop}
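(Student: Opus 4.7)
The guiding observation is that, under the assumed symmetries, the gap function reads
\[
\mathcal{G}_{f,\psi}(x) = u(x,l)-u(x,-l) = 2u^o(x,l),
\]
so it measures exactly the odd-in-$y$ component $u^o$ of $u = u_{f,\psi}$.

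For the first claim I argue by contradiction: suppose $f^o = 0$. Since $f = f^e$ is even in $y$ and the obstacles $\psi_{\pm}=\pm\psi$ are both even in $y$, Lemma \ref{lemsymbis}(i) forces $u^o\equiv 0$, hence $\mathcal{G}_{f,\psi}\equiv 0$ and $\mathcal{G}^\infty_{f,\psi}=0$. If $f$ is a maximizer, this gives $\mathcal{G}^\infty_\psi=0$, contradicting the strict positivity of the maximal gap, which I would establish separately by exhibiting an explicit admissible test force. A natural candidate is $f_0 = \delta(\chi_{A^+}-\chi_{A^-})$, where $A^+\subset\Omega\cap\{y>0\}$ is a small open set, $A^-$ its reflection across the $x$-axis, and $\delta>0$ is small enough to guarantee $|u_{f_0,\psi}|<\psi$ in $\Omega_{\text{O}}$ (possible since $\psi\geq\gamma>0$). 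Then $u_{f_0,\psi}$ solves \eqref{loadpb0weak}, is odd in $y$, and is not identically zero on $\{y=l\}$ for suitable $A^+$ as a consequence of the Fourier expansion \eqref{green3} together with the fact that the coefficients in \eqref{soluzdirac2} are not symmetric under $\eta\mapsto-\eta$ (isolating the term $(1+m|y-\eta|)e^{-m|y-\eta|}$ already shows this).

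For the second claim, assume both contact sets of $u=u_{f,\psi}$ are empty, so $|u|<\psi$ strictly in $\Omega_{\text{O}}$. Then $u$ solves the linear problem \eqref{loadpb0weak} with datum $f$, and by linearity together with the uniqueness-based parity argument for the free equation (if $g$ is even/odd in $y$, its free solution is even/odd) we obtain $u = u_e + u_o$, where $u_e$ and $u_o$ are the free solutions with data $f^e$ and $f^o$ and are even and odd in $y$, respectively; in particular $u^o=u_o$. The pivotal pointwise estimate is
\[
|u^o(x,y)| \leq \frac{|u(x,y)|+|u(x,-y)|}{2} < \frac{\psi(x,y)+\psi(x,-y)}{2} = \psi(x,y) \qquad \text{in } \Omega_{\text{O}},
\]
using strictness of $|u|<\psi$, evenness of $\psi$, and symmetry of $\Omega_{\text{O}}$. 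Hence $u^o\in H^2_{*,\psi}(\Omega)$ and, satisfying \eqref{loadpb0weak} with datum $f^o$, it coincides by uniqueness with $u_{f^o,\psi}$. Since $\|f^o\|_{(C^0(\overline{\Omega}))'}\leq 1$ by \eqref{noma e o}, $f^o\in\mathcal{F}$, and
\[
\mathcal{G}_{f^o,\psi}(x) = 2u^o(x,l) = \mathcal{G}_{f,\psi}(x)\qquad \forall x\in[0,\pi],
\]
which yields $\mathcal{G}^\infty_{f^o,\psi}=\mathcal{G}^\infty_{f,\psi}=\mathcal{G}^\infty_\psi$; thus $f^o$ is a maximizer.

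The main obstruction is the separate verification that $\mathcal{G}^\infty_\psi>0$: the positivity preserving property \eqref{ppp} does not on its own distinguish traces on the two long edges, so one must extract the asymmetry of the Green function under $\eta\mapsto-\eta$ from the explicit formula \eqref{soluzdirac2}. The rest of the argument is algebraic: decompose $f=f^e+f^o$, invoke Lemma \ref{lemsymbis}(i) for the negative part, and exploit linearity and the strict bound $|u|<\psi$ for the positive part.
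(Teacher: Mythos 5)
Your proof is correct and follows essentially the same route as the paper's: Lemma \ref{lemsymbis}(i) forces $\mathcal{G}^\infty_{f,\psi}=0$ when $f^o=0$, and, when the contact sets are empty, the odd part $u^o$ is admissible, solves the obstacle problem with datum $f^o$, and produces the same gap function, so that \eqref{noma e o} makes $f^o$ a maximizer. The only substantive difference is that you explicitly flag the need for $\mathcal{G}^\infty_{\psi}>0$ in the first claim (which the paper leaves implicit); your sketched verification is fine in spirit, but isolating the single term $(1+m|y-\eta|)e^{-m|y-\eta|}$ in \eqref{soluzdirac2} does not by itself show that the full series is nonzero on $\{y=l\}$ — it is cleaner to quote the strictly positive value of $\mathcal{G}^\infty_{T_{\pi/2,\pm l}}$ established in \cite{BF1} and used later in \eqref{explicit M}.
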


\proof  For $\psi\in \text{O}_{+}(\gamma)$, even in $y$, fixed, let $f \in (C^0(\overline{\Omega}))'$ be the maximizer of $\mathcal{G}^\infty_{\psi}$ and let $u=u_{f,\psi} \in H^2_{*,\psi}(\Omega)$ be the corresponding solution to \eqref{var1bis}. By the definition of the gap function and the maximal gap, we have
\begin{equation}\label{oddgap}
\mathcal{G}_{f,\psi}(x)=u^o(x,l)-u^o(x,-l)
\qquad \text{and} \qquad
\mathcal{G}^\infty_{f,\psi}= \max_{x \in [0,\pi]} |u^o(x,l)-u^o(x,-l)|.
\end{equation}
If $f^o = 0$, by Lemma \ref{lemsymbis} point i), we have $u^o=0$, hence $\mathcal{G}^\infty_{f,\psi}=0$ and $f$ cannot be a maximizer for $\mathcal{G}^\infty_{\psi}$. Therefore, $f^o \neq 0$.
\par
For the last part of the proof, assume that the contact sets are both empty, then $u=u_{f,\psi}$ satisfies the partially hinged plate problem \eqref{loadpb0weak}. By testing \eqref{loadpb0weak} with odd test functions we get 
\begin{equation*}
	(u^o,\varphi^o)_{H^2_*(\Omega)} =\langle f^o, \varphi^o \rangle  \qquad\forall \varphi\in H^2_*(\Omega)\,,
	\end{equation*}
    and then
    \begin{equation*}
	(u^o,\varphi)_{H^2_*(\Omega)} =\langle f^o, \varphi \rangle  \qquad\forall \varphi\in H^2_*(\Omega)\,.
	\end{equation*}
Since $|u^o|<\psi$ in $\Omega$, this means that $u^o$ solves \eqref{var0} with $f=f^o$. In view of \eqref{oddgap} and since from \eqref{noma e o} $\|f^o\|_{(C^0(\overline{\Omega}))'} \leq 1$,  we infer that $\mathcal{G}^\infty_{\psi}=\mathcal{G}^\infty_{f,\psi}=\mathcal{G}^\infty_{f^o,\psi}$, namely $f^o$ is a maximizer.
\endproof

\vspace{0.5cm}

\subsubsection{Some remarks about the best obstacles under odd forces}\label{best}
Let $\mathcal{F}$ be the set of functions defined in Theorem \ref{Th max maxbis} and, for $\kappa_{ \pm} \geq \gamma_{\pm}>0$, let $\Psi_{ \pm}
=\Psi_{ \pm}
(\gamma_{ \pm}, \kappa_{ \pm}, \Omega_{\text{O}})$ be the set of obstacles defined in \eqref{PSI}. We denote $\Psi_{ \pm}^e=\Psi_{ \pm}^e(\gamma_{ \pm}, \kappa_{ \pm},\Omega_{\text{O}})$ the subset of $\Psi_{\pm}$ of even in $y$ obstacles (with domain $\Omega_{\text{O}}$ symmetric with respect to the $x$-axis). From Theorem \ref{Th max maxbis} and Proposition \ref{symprop} we know that, if $ \gamma_+\equiv \gamma_-\equiv \gamma$, $\kappa_+\equiv\kappa_-\equiv \kappa$ and $\psi_{+}\equiv -\psi_{-}\equiv \psi$ with $\psi \in\Psi_+^e=\Psi_+^e(\gamma,\kappa,\Omega_{\text{O}})$, then the corresponding {\it worst} force/s exists in $\mathcal{F}$ and must have a nontrivial odd part. Moreover, when the contact sets are empty, the worst force can be assumed to be odd. This motivates our choice in the following two sections to restrict the set of admissible worst forces to odd distributions, regardless of whether the contact sets are empty or not. Therefore, we consider the problem
$$\mathcal{G}^\infty_{\psi}= \max_{f \in \mathcal{F}, f\equiv f^o} \mathcal{G}^\infty_{f,\psi}= \mathcal{G}^\infty_{ f_{\psi},\psi}$$
for some $f_{\psi} \in (C^0(\overline{\Omega}))'_{\mathcal{O}}$. Then, we look for the {\it best} obstacles, namely minimizers of 
$$\mathcal{G}^\infty=\min_{\psi \in\Psi_+^e}  \mathcal{G}^\infty_{\psi}=\min_{\psi \in\Psi_+^e}  \mathcal{G}^\infty_{ f_{\psi},\psi}\,. $$
From Theorem \ref{min exists} the above problem admits a solution; we call an {\it optimal pair} any couple of worst force - best obstacle $(f_{\bar \psi},\bar \psi)\in (C^0(\overline{\Omega}))'_{\mathcal{O}} \times\Psi_+^e$ achieving $\mathcal{G}^\infty$, namely such that
\begin{equation}
\label{optimal}
\mathcal{G}^\infty= \mathcal{G}^\infty_{ f_{\bar\psi},\bar\psi}\,.
\end{equation}
In the following we provide some information about the value of $\mathcal{G}^\infty$ and optimal pairs which allow us to suggest, at the end of the section, the possible best obstacles. We start by remarking that, by statement $(ii)$ of Lemma \ref{lemsymbis}, the minimizer $u_{f,\psi}\in H^2_{*,\psi}(\Omega)$ of the energy $\mathbb{E}$, corresponding to any couple $(f, \psi)\in (C^0(\overline{\Omega}))'_{\mathcal{O}} \times\Psi_+^e$, is odd in $y$. Then, if we denote by $L_{\pm}$ the long edges of the plate, namely $L_-=[0,\pi]\times \{-l\}$ and $L_+=[0,\pi]\times \{l\}$, and we assume that $L_-\cup L_+ \subseteq \Omega_{\text{O}}$, we deduce that
$$ \mathcal{G}^\infty_{f,\psi}=  2\max_{x \in [0,\pi]} |u_{f,\psi}(x,l)|\leq 2 \max_{x \in [0,\pi]} \psi(x,l)$$
and, in turn, that
\begin{equation}
\label{upper_bound}         \mathcal{G}^\infty\leq 2 \min_{\psi \in\Psi_+^e }\max_{x \in [0,\pi]} \psi(x,l)=2\gamma. 
    \end{equation}
 The above minimum is trivially achieved by $\psi \equiv \gamma$ or, more in general by any $\psi\in\Psi_+^e$ satisfying $\psi \vert_{L_{+}}  \equiv \gamma$. However, the following proposition suggests that the upper bound $2\gamma$ for $\mathcal{G}^\infty$, given in \eqref{upper_bound}, might not be sharp.

\begin{thm}
\label{Prop min cost 1d}
    Assume that $\Omega_{\text{\emph{O}}}$ is symmetric with respect to the $x$-axis and that $L_-\cup L_+ \subseteq \Omega_{\text{\emph{O}}}$. Furthermore, let $(f_{\bar \psi},\bar \psi)\in (C^0(\overline{\Omega}))'_{\mathcal{O}} \times\Psi_+^e$ be an optimal pair as defined in \eqref{optimal} and denote by $u_{f_{\bar \psi},\bar \psi}\in H^2_{*, \bar\psi}(\Omega) $ the corresponding minimizer of the energy $\mathbb{E}$.  There holds
    \begin{itemize}
\item[(i)] $\mathcal{G}^\infty=\displaystyle{  \min_{\psi \in\Psi_+^e}  \mathcal{G}^\infty_{ f_{\psi},\psi}<2\gamma}$ if and only if $\|u_{f_{\bar \psi},\bar \psi}\|_{L^{\infty}(L_+)}< \gamma$;
\item[(ii)] $\mathcal{G}^\infty=\displaystyle{\min_{\psi \in\Psi_+^e}  \mathcal{G}^\infty_{ f_{\psi},\psi}}=2 \gamma$ if and only if $|u_{f_{\bar \psi},\bar \psi}(x_0,l)|=\gamma$ for some $x_0\in (0,\pi)$.
\end{itemize}
In particular, if case $(ii)$ occurs, then $\mathcal{G}^\infty=\displaystyle{\min_{\psi \in\Psi_+^e}  \mathcal{G}^\infty_{ f_{\psi},\psi}}=\mathcal{G}^\infty_{ f_{\psi_{\gamma}},\psi_{\gamma} }$ 
for any $\psi_{\gamma} \in\Psi_+^e:$ $\psi_{\gamma}\vert_{L_{+}}  \equiv \gamma$. 
\end{thm}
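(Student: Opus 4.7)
The plan is to reduce everything to a single key identity obtained from an oddness argument. Since $\bar\psi\in\Psi_+^e$ is even in $y$ (and $\Omega_{\text{O}}$ is symmetric with respect to the $x$-axis), and since $f_{\bar\psi}\in(C^0(\overline{\Omega}))'_{\mathcal{O}}$ is odd in $y$, Lemma \ref{lemsymbis}(ii) applied with $\psi=\bar\psi$ yields that $u_{f_{\bar\psi},\bar\psi}$ is odd in $y$. This collapses the gap function on $[0,\pi]$ to $\mathcal{G}_{f_{\bar\psi},\bar\psi}(x) = 2\,u_{f_{\bar\psi},\bar\psi}(x,l)$, so that
$$\mathcal{G}^\infty=\mathcal{G}^\infty_{f_{\bar\psi},\bar\psi}=2\,\|u_{f_{\bar\psi},\bar\psi}\|_{L^\infty(L_+)}.$$
Combining this identity with the upper bound $\mathcal{G}^\infty\leq 2\gamma$ from \eqref{upper_bound} immediately gives $\|u_{f_{\bar\psi},\bar\psi}\|_{L^\infty(L_+)}\leq \gamma$.

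With this identity in hand, $(i)$ is immediate: $\mathcal{G}^\infty<2\gamma$ if and only if $\|u_{f_{\bar\psi},\bar\psi}\|_{L^\infty(L_+)}<\gamma$. For $(ii)$, I would argue that $\mathcal{G}^\infty=2\gamma$ is equivalent to the continuous function $x\mapsto|u_{f_{\bar\psi},\bar\psi}(x,l)|$ attaining its maximum value $\gamma$ at some $x_0\in[0,\pi]$; then I would exclude $x_0\in\{0,\pi\}$ by invoking the hinged boundary conditions $u_{f_{\bar\psi},\bar\psi}(0,y)=u_{f_{\bar\psi},\bar\psi}(\pi,y)=0$ for all $y\in[-l,l]$, which force the value at the endpoints to be $0\neq\gamma$, so necessarily $x_0\in(0,\pi)$.

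For the final assertion, in case $(ii)$ I would fix any $\psi_\gamma\in\Psi_+^e$ with $\psi_\gamma|_{L_+}\equiv\gamma$ and apply the same oddness reduction to the pair $(f_{\psi_\gamma},\psi_\gamma)$: since $\psi_\gamma$ is even in $y$ and $f_{\psi_\gamma}$ is odd in $y$, Lemma \ref{lemsymbis}(ii) again gives $\mathcal{G}^\infty_{f_{\psi_\gamma},\psi_\gamma}=2\|u_{f_{\psi_\gamma},\psi_\gamma}\|_{L^\infty(L_+)}$, and the obstacle constraint on $L_+\subseteq\Omega_{\text{O}}$ yields $|u_{f_{\psi_\gamma},\psi_\gamma}|\leq\psi_\gamma=\gamma$ on $L_+$. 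Hence $\mathcal{G}^\infty_{f_{\psi_\gamma},\psi_\gamma}\leq 2\gamma=\mathcal{G}^\infty$, and the minimality of $\mathcal{G}^\infty$ forces equality.

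The argument is essentially bookkeeping built on the oddness reduction together with the already-established bound \eqref{upper_bound}, so I do not anticipate any serious obstacle. The only points requiring care are to verify that the hypotheses of Lemma \ref{lemsymbis}(ii) are in force throughout (even in $y$ obstacle, symmetric $\Omega_{\text{O}}$, odd force, all automatic in the setting of Section \ref{best}) and to use the assumption $L_-\cup L_+\subseteq\Omega_{\text{O}}$, which is what legitimizes the pointwise bound $|u|\leq\psi$ on the long edges themselves.
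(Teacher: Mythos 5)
Your proposal is correct and follows essentially the same route as the paper: the oddness reduction via Lemma \ref{lemsymbis}(ii) giving $\mathcal{G}^\infty=2\|u_{f_{\bar\psi},\bar\psi}\|_{L^\infty(L_+)}$, combined with the upper bound \eqref{upper_bound}, is exactly the paper's key identity, and your treatment of the final assertion (obstacle constraint on $L_+$ plus minimality) matches the paper's chain of inequalities. Your explicit exclusion of $x_0\in\{0,\pi\}$ via the hinged boundary conditions is a small detail the paper leaves implicit, but otherwise the arguments coincide.
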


\proof
Let $(f_{\bar \psi},\bar \psi)\in (C^0(\overline{\Omega}))'_{\mathcal{O}} \times \Psi_+^e$ be an optimal pair, then the corresponding minimizer of the energy $\mathbb{E}$, $u_{f_{\bar \psi},\bar \psi} \in H^2_{*, \bar\psi}(\Omega)$, satisfies
$$\mathcal{G}^\infty =\mathcal{G}^\infty_{f_{\bar \psi},\bar \psi}=2\max_{x\in[0,\pi]}|u_{f_{\bar \psi},\bar \psi}(x,l)|=2\|u_{f_{\bar \psi},\bar \psi}\|_{L^{\infty}(L_+)}$$
and the statement $(i)$ follows at once. \par
As for the statement $(ii)$, if $\mathcal{G}^\infty=2 \gamma$, from the above formula we immediately infer that there must exist $x_0\in (0,\pi)$ such that $|u_{f_{\bar \psi},\bar \psi}(x_0,l)|=\gamma$. Conversely, if we assume that $|u_{f_{\bar \psi},\bar \psi}(x_0,l)|= \gamma$ for some $x_0\in (0,\pi)$. The fact that $(f_{\bar \psi},\bar \psi)\in (C^0(\overline{\Omega}))'_{\mathcal{O}} \times\Psi_+^e$ is an optimal pair readily gives that
$$\mathcal{G}^\infty=\mathcal{G}^\infty_{f_{\bar \psi},\bar \psi} \geq 2|u_{f_{\bar \psi},\bar \psi}(x_0,l)| = 2\gamma\,.$$
Whence, in view of \eqref{upper_bound} we immediately conclude that $\mathcal{G}^\infty=2\gamma$.
To complete the proof, let $u_{f_{\psi_{\gamma}},\psi_{\gamma}}\in H^2_{*, \psi_{\gamma}}(\Omega)$ be the minimizer of the energy corresponding to any obstacle $\psi_{\gamma}\in \Psi_+^e$ with $\psi_{\gamma} \in\Psi_+^e:$ $\psi_{\gamma}\vert_{L_{+}}  \equiv \gamma$ and to the worst among odd forces $f_{\psi_{\gamma}}\in(C^0(\overline{\Omega}))'_{\mathcal{O}}$. Namely,
$$\max_{f \in \mathcal{F}, f\equiv f^o} \mathcal{G}^\infty_{f,\psi_{\gamma}}= \mathcal{G}^\infty_{f_{\psi_{\gamma}},\psi_{\gamma}}=2\max_{x\in[0,\pi]}|u_{f_{\psi_{\gamma}},\psi_{\gamma}}(x,l)|\,.$$
Since $|u_{f_{\psi_{\gamma}}, \psi_{\gamma}}(x,l)|\leq \psi_{\gamma}\vert_{L_{+}}  \equiv \gamma$ for all $x\in[0,\pi]$, for what was proved above we deduce that 
$$\mathcal{G}^\infty=2\gamma \geq 2\max_{x\in[0,\pi]}|u_{f_{\psi_{\gamma}}, \psi_{\gamma}}(x,l)|=  \mathcal{G}^\infty_{f_{\psi_{\gamma}}, \psi_{\gamma}}\geq \displaystyle{\min_{\psi \in \Psi_+^e}  \mathcal{G}^\infty_{ f_{\psi},\psi}= \mathcal{G}^\infty}\,.$$
Namely, $(f_{\psi_{\gamma}},\psi_{\gamma})\in(C^0(\overline{\Omega}))'_{\mathcal{O}} \times\Psi_+^e$ is an optimal pair and the proof is complete.
\endproof

From Proposition \ref{Prop min cost 1d} we deduce that:
\begin{itemize} 
\item case $(i)$ can be seen as the safer case since $\mathcal{G}^\infty$ does not reach the upper bound \eqref{upper_bound} and $u_{f_{\bar \psi},\bar \psi}$ does not touch the obstacles along the long edges. In particular, if $\Omega_{\text{O}}=L_- \cup L_+$, then $u_{f_{\bar \psi},\bar \psi}$ solves the partially hinged plate problem \eqref{loadpb0weak} with $f=f_{\bar \psi}$;
\item if case $(ii)$ occurs, then the minimizer may touch the obstacles at the long edges, as it happens for the minimizer corresponding to the optimal couples $(f_{\psi_{\gamma}},\psi_{\gamma})$ defined in the last part of the statement. This is the situation in which obstacles help in improving the stability. Furthermore, the constant function $\psi\equiv \gamma$ is among the best obstacles.
\end{itemize} 
Based on the above observations we conclude that 
\begin{center}
\textit{a possible way to improve the torsional stability of the plate is\\ applying horizontal guides along its long edges (at levels $\pm \gamma$)\,.}
\end{center}

In the next section we provide sufficient conditions on $\gamma$ (in terms of the parameters of the plate) for case $(i)$ and $(ii)$ of Proposition \ref{Prop min cost 1d} to occur when $\mathcal{F}$ is properly chosen, see Theorems \ref{large delta} and \ref{small delta}.

\subsubsection{Best obstacles for antisymmetric delta-type forces} 
As in Section \ref{best} we assume that $ \gamma_+\equiv \gamma_-\equiv \gamma$, $\kappa_+\equiv\kappa_-\equiv \kappa$ and $\psi_{+}\equiv -\psi_{-}\equiv \psi$ with $\psi \in\Psi_+^e=\Psi_+^e(\gamma,\kappa,\Omega_{\text{O}})$ with $\Omega_{\text{O}}$ symmetric with respect to the $x$-axis, and we restrict our attention on odd forces. More precisely, here we focus on the odd distributions:
$$T_{\xi,\eta}:= \frac{\delta_{(\xi,\eta)}-\delta_{(\xi,-\eta)}}{2} \quad \text{with }(\xi,\eta)\in \overline \Omega$$
where $\delta_p$ is the Dirac delta with mass concentrated at $p\in \overline \Omega$. Clearly, $T_{\xi,\eta}\in (C^0(\overline{\Omega}))'_{\mathcal{O}}$ and $\|T_{\xi,\eta}\|_{(C^0(\overline{\Omega}))' } = 1$. The above choice is motivated by \cite[Section 4]{BF2}, where it was shown that the maximizing sequences found numerically for problem \eqref{gap2bis}, in the obstacle free case with $L^1$ loads, even if does not converge,
exhibits spikes with opposite signs in the boundary points that suggest a weak*-convergence to deltas
concentrated in these points. Then, we denote by $w_{\xi,\eta}\in H^2_{*,\psi}(\Omega)\subset C^0(\overline{\Omega})$ the minimizer of $\mathbb{E}$ corresponding to $f=T_{\xi,\eta}$. Hence, $w_{\xi,\eta}$ satisfies the variational inequality
 \begin{equation}
 \label{var00}
     (w_{\xi,\eta},\varphi-w_{\xi,\eta})_{H^2_{*}(\Omega)}  \ge \langle T_{\xi,\eta}, \varphi-w_{\xi,\eta} \rangle \qquad \forall \varphi \in H^2_{*,\psi}(\Omega).
 \end{equation}

Furthermore, we denote by $v_{\xi,\eta}\in H^2_*(\Omega)\subset C^0(\overline{\Omega})$ the solution to the partially hinged plate problem, namely
\begin{equation}\label{weakxieta}
(v_{\xi,\eta},\phi)_{H^2_*}=\langle T_{\xi,\eta},\phi\rangle\qquad\forall \phi\in H^2_*(\Omega)\,.
\end{equation}
 When the contact sets \eqref{sets} are empty, then $v_{\xi,\eta}$ and $w_{\xi,\eta}$ coincide. From the Fourier expansion of the Green function \eqref{green3}, one may deduce that of $v_{\xi,\eta}$. More precisely, there holds:

\begin{prop}\label{monotonia} \cite[Theorem 2.1]{BF2}
	Let $p=(\xi,\eta)\in \overline \Omega$. Then,
	\begin{equation*}
	v_{\xi,\eta}(x,y)=\dfrac{1}{4\pi}\sum_{m=1}^{+\infty} \dfrac{\phi_m(y,\eta)-\phi_m(y,-\eta)}{m^3}\sin(m\xi)\,\sin(mx) \qquad \forall (x,y)\in\overline \Omega\,,
	\end{equation*}
where the functions $\phi_m(y,\eta)$ are given explicitly in formula \eqref{soluzdirac2} and the series converges uniformly in $\overline{\Omega}$. 
	\end{prop}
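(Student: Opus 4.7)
The plan is essentially to reduce the claim to the Fourier expansion of the Green function $G_p$ already recorded in \eqref{green3}, exploiting linearity of the partially hinged plate problem and the fact that $T_{\xi,\eta}$ is a difference of two Dirac masses. First I would observe that for each fixed $p\in\overline\Omega$ the Green function $G_p\in H^2_*(\Omega)$ is, by its very definition in Section \ref{obst_general}, the unique solution of \eqref{loadpb0weak} with $\delta_p$ in place of $f$. Consequently, since
\[
T_{\xi,\eta}=\frac{\delta_{(\xi,\eta)}-\delta_{(\xi,-\eta)}}{2},
\]
linearity of \eqref{weakxieta} combined with the uniqueness of solutions to the partially hinged plate problem gives at once
\[
v_{\xi,\eta}(x,y)=\frac{1}{2}\bigl[G_{(\xi,\eta)}(x,y)-G_{(\xi,-\eta)}(x,y)\bigr]\qquad\forall(x,y)\in\overline\Omega.
\]

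Next I would substitute the Fourier expansion \eqref{green3} of the Green function into both summands: for every $(x,y)\in\overline\Omega$,
\[
G_{(\xi,\pm\eta)}(x,y)=\sum_{m=1}^{+\infty}\frac{1}{2\pi}\frac{\phi_m(y,\pm\eta)}{m^3}\sin(m\xi)\sin(mx).
\]
Since the series \eqref{green3} converges uniformly in $\overline\Omega$ (as recalled in the text following the definition of $\phi_m$), I may subtract the two expansions term by term and the combined series still converges uniformly in $\overline\Omega$. This yields
\[
v_{\xi,\eta}(x,y)=\frac{1}{4\pi}\sum_{m=1}^{+\infty}\frac{\phi_m(y,\eta)-\phi_m(y,-\eta)}{m^3}\sin(m\xi)\sin(mx),
\]
which is exactly the asserted formula.

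There is really no hard step here: the difficult analytic work (explicit diagonalization of the biharmonic operator with the partially hinged boundary conditions, derivation of the coefficients $\phi_m$, positivity and decay estimates ensuring uniform convergence) has already been carried out in \cite{BF2} and encoded in \eqref{green3}. The only point worth spelling out is the linearity step, which uses that \eqref{weakxieta} is a linear variational equation (no obstacles appear in it) so that its unique solution depends linearly on the distributional datum $T_{\xi,\eta}\in(C^0(\overline\Omega))'$; the Dirac masses $\delta_{(\xi,\pm\eta)}$ belong to this dual space, and their images under the solution operator are, by definition, $G_{(\xi,\pm\eta)}$.
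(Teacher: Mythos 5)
Your argument is correct and coincides with the intended justification: the paper gives no independent proof of Proposition \ref{monotonia} (it is quoted from \cite[Theorem 2.1]{BF2}), and the natural derivation is exactly yours, namely writing $v_{\xi,\eta}=\tfrac12\bigl[G_{(\xi,\eta)}-G_{(\xi,-\eta)}\bigr]$ by linearity and uniqueness for \eqref{weakxieta} and then subtracting the uniformly convergent expansions \eqref{green3}. Nothing further is needed.
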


 For a given $\psi\in\Psi_+^e$, we denote the gap function associated with $w_{\xi,\eta}$ and $v_{\xi,\eta}$, respectively, as$\mathcal{G}_{T_{\xi,\eta},\psi}$ and $\mathcal{G}_{T_{\xi,\eta}}$ with their maximal gaps: $\mathcal{G}_{T_{\xi,\eta}, \psi}^{\infty}$ 
 and $\mathcal{G}_{T_{\xi,\eta}}^{\infty}$. More precisely, since both $w_{\xi,\eta}$ and $v_{\xi,\eta}$ are odd in $y$, we have
\begin{align}\label{funzionale}
\mathcal{G}_{T_{\xi,\eta}, \psi}(x) &=2 w_{\xi,\eta}(x,l)\quad  \text{ and }  \quad \mathcal{G}_{T_{\xi,\eta}, \psi}^{\infty}= \max_{x\in[0,\pi]}\, |\mathcal{G}_{T_{\xi,\eta}}(x)|\, ,\\ \notag
\mathcal{G}_{T_{\xi,\eta}}(x)&=2 v_{\xi,\eta}(x,l)\quad \text{ and }  \quad\mathcal{G}_{T_{\xi,\eta}}^{\infty}= \max_{x\in[0,\pi]}\, |\mathcal{G}_{T_{\xi,\eta}}(x)|\, .
\end{align}
As usual, we first seek the \textit{worst} among the forces $T_{\xi,\eta}$ as $(\xi,\eta)$ vary in $\overline \Omega$, namely, the one maximizing the maximal gaps. In the obstacle-free case, from Proposition \ref{monotonia} it readily follows the Fourier expansion of the gap function $\mathcal{G}_{T_{\xi,\eta}}$. However, the complexity of the analytic expression of the coefficients $\phi_m$ makes it hard to determine the maximum points of the map $(\xi,\eta)\in \overline \Omega \mapsto \mathcal{G}_{T_{\xi,\eta}}^{\infty}$. We refer to \cite[Conjecture 5]{antunes} for a numerical solution of the problem in the obstacle-free case. An analytical proof was given in \cite[Theorem 2.3]{BF1}, by replacing $\overline \Omega$ with a suitable subset $\tilde \Omega$:
\begin{prop}\label{conj} \cite[Theorem 2.3]{BF1}
There holds
\begin{equation}\label{GGD}
\max_{(\xi,\eta)\in \tilde \Omega}\,  \mathcal{G}_{T_{\xi,\eta}}^{\infty}= \mathcal{G}^\infty_{T_{\frac{\pi}{2},\pm l}}\,,
\end{equation}
where
\begin{equation}\label{omega}
\tilde \Omega:=\left([0,z_0]\cup [\pi-z_0,\pi]\cup\left\{\frac{\pi}{2}\right\} \right)\times[-l,l] \cup[0,\pi]\times[-w_0,w_0]  \subset \overline \Omega
\end{equation}
with $0<z_0<\frac{\pi}{2}$ and $0<w_0<l$ explicitly given in \cite[Section 5]{BF1}.
\end{prop}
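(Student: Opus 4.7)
The plan is to exploit the Fourier representation of $v_{\xi,\eta}$ provided by Proposition \ref{monotonia}. Setting $\Phi_m(\eta):=\phi_m(l,\eta)-\phi_m(l,-\eta)$, which is odd in $\eta$, the gap function $\mathcal{G}_{T_{\xi,\eta}}$ reads
$$\mathcal{G}_{T_{\xi,\eta}}(x)=\frac{1}{2\pi}\sum_{m=1}^{+\infty}\frac{\Phi_m(\eta)}{m^3}\sin(m\xi)\sin(mx),$$
and the problem reduces to locating the maximum of $\mathcal{G}^\infty_{T_{\xi,\eta}}$ over $(\xi,\eta)\in\tilde\Omega$. At the candidate optimum $(\xi,\eta,x)=(\pi/2,\pm l,\pi/2)$ one has $\sin(m\xi)\sin(mx)=\sin^2(m\pi/2)\in\{0,1\}$, so all non-vanishing terms enter with the same sign and
$$\mathcal{G}^{\infty}_{T_{\pi/2,\pm l}}\geq \frac{1}{2\pi}\sum_{m\ \text{odd}} \frac{\Phi_m(l)}{m^3};$$
the remaining effort is devoted to proving the reverse inequality on $\tilde\Omega$.

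I would first establish the structural properties of $\Phi_m$ that underpin the comparison. Starting from the explicit formula \eqref{soluzdirac2} and using the positivity and monotonicity in $m$ recalled in \eqref{decreasing}, the aim is to show that $\eta\mapsto\Phi_m(\eta)$ is positive and strictly increasing on $(0,l]$ uniformly in $m$, so that the pointwise bound $0<\Phi_m(\eta)\leq \Phi_m(l)$ holds, and that the quantitative oddness estimate $|\Phi_m(\eta)|\leq C\,m\,|\eta|\,\Phi_m(l)/l$ is valid for some absolute $C>0$.

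The proof then naturally splits $\tilde\Omega$ into its three pieces. On the horizontal strip $[0,\pi]\times[-w_0,w_0]$, the small-$\eta$ bound on $\Phi_m$ combined with the $m^{-3}$ decay of the series makes $\mathcal{G}^\infty_{T_{\xi,\eta}}$ strictly smaller than $\mathcal{G}^\infty_{T_{\pi/2,\pm l}}$ for a suitably small $w_0$. On the side strips $([0,z_0]\cup[\pi-z_0,\pi])\times[-l,l]$, the elementary bound $|\sin(m\xi)|\leq m\xi$ (and its reflection near $\pi$) together with $|\Phi_m(\eta)|\leq \Phi_m(l)$ yields a majorant of order $z_0\sum_m m^{-2}\Phi_m(l)$, which is below the candidate maximum for small $z_0$. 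Finally, on the segment $\{\pi/2\}\times[-l,l]$ the monotonicity $|\Phi_m(\eta)|\leq \Phi_m(l)$ with $|\sin(m\pi/2)|\leq 1$ gives $\mathcal{G}^\infty_{T_{\pi/2,\eta}}\leq \mathcal{G}^\infty_{T_{\pi/2,\pm l}}$, with equality precisely at $\eta=\pm l$ by the oddness of $\Phi_m$.

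The main obstacle is the uniform-in-$m$ sign and monotonicity analysis of $\Phi_m(\eta)$: the expression \eqref{soluzdirac2} is a combination of hyperbolic functions whose coefficients do not have a fixed sign, so direct inspection does not yield the required estimates. One must rewrite $\Phi_m(\eta)$ in a form that isolates the dominant hyperbolic contributions and then rely on the delicate inequalities carried out in \cite[Section 3]{BF1}. Once these pointwise bounds and the resulting choices of $w_0$ and $z_0$ are in place, the three-piece analysis closes the argument and \eqref{GGD} follows.
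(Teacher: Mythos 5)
This proposition is not proved in the paper at all: it is quoted verbatim from \cite[Theorem 2.3]{BF1}, and the authors explicitly defer to that reference (and to \cite[Section 5]{BF1} for the constants $z_0,w_0$). So there is no in-paper argument to compare yours against; what can be assessed is whether your sketch would stand on its own as a proof.

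It does not, for two reasons. First, the entire analytic content is concentrated in the two estimates you assert but do not establish: the positivity and monotonicity of $\eta\mapsto\Phi_m(\eta)=\phi_m(l,\eta)-\phi_m(l,-\eta)$ on $(0,l]$ for every $m$, and the quantitative bound $|\Phi_m(\eta)|\leq C\,m\,|\eta|\,\Phi_m(l)/l$ with $C$ independent of $m$. You acknowledge this and propose to ``rely on the delicate inequalities carried out in \cite[Section 3]{BF1}'' --- but that is precisely the source whose theorem you are supposed to be proving, so the argument is circular as a freestanding proof. The second estimate is particularly nontrivial: it requires a lower bound on $\Phi_m(l)$ uniform in $m$ to compare against $\sup_\eta|\partial_\eta\phi_m(l,\eta)|$, and nothing in the expression \eqref{soluzdirac2} makes that immediate (the coefficients $\overline\zeta,\overline\vartheta,\overline\psi,\overline\omega$ have no fixed sign). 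Second, there is a logical mismatch in how you treat $z_0$ and $w_0$: in the statement they are \emph{specific} constants computed in \cite[Section 5]{BF1}, whereas your argument treats them as free parameters to be taken ``suitably small''. That would prove a weaker statement (existence of \emph{some} neighborhood structure $\tilde\Omega$ on which the maximum is attained at $(\pi/2,\pm l)$), not the proposition as stated with the explicit $z_0,w_0$ of \cite{BF1}. Your decomposition of $\tilde\Omega$ into the three pieces, the lower bound at $(\xi,x)=(\pi/2,\pi/2)$ via $\sin^2(m\pi/2)\in\{0,1\}$, and the use of $|\sin(m\xi)|\leq m\xi$ on the side strips are all sensible and consistent with the techniques of \cite{BF1} (indeed the monotonicity of $\eta\mapsto\mathcal{G}^\infty_{T_{\pi/2,\eta}}$ is exactly \cite[Proposition 5.1]{BF1}, used later in the paper's proof of Theorem \ref{small delta}); but as written the proposal is an outline whose load-bearing steps are outsourced to the cited reference.
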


The goal of this section is to provide an explicit threshold for $\gamma$ so that the two cases of Proposition \ref{Prop min cost 1d} occur when restricting to the family of distributions considered in Proposition \ref{conj}, namely to the class of forces
$$\tilde{\mathcal F}:=\{T_{\xi,\eta}: (\xi,\eta)\in \tilde \Omega \}$$
with $\tilde \Omega$ as given in \eqref{omega}. Let $\Omega_{\text{O}}$ be a closed subset of $\overline \Omega$, symmetric with respect to the $x$-axis, and let $v_{\xi,\eta}$ be as given in Proposition \ref{monotonia}, we set
\begin{equation}\label{M}
M_{\xi, \eta}=M_{\xi, \eta}(\sigma, l, \Omega_{\text{O}}):=\max_{(x, y)\in \Omega_{\text{O}}} |v_{\xi,\eta}(x,y)| \quad \text{and} \quad M=M(\sigma, l,\Omega_{\text{O}}):=\max_{(\xi, \eta)\in \tilde \Omega} M_{\xi, \eta}\,.
\end{equation}

Clearly, $M_{0, \eta}=M_{\pi, \eta}=M_{\xi,0}=0$. Moreover, the map $(\xi, \eta)\in \tilde \Omega \mapsto M_{\xi, \eta}\in [0,M]$ is continuous.  In the following statement we prove that, when the forces lie in the set $\tilde{\mathcal F}$ and $\gamma>M$, then the obstacles cannot be exploited to improve the torsional stability of the plate:
\begin{thm}\label{large delta}
   Let $\Omega_{\text{\emph{O}}}$ be a not empty, closed subset of $\overline \Omega$, symmetric with respect to the $x$-axis and let $M=M(\sigma, l, \Omega_{\text{\emph{O}}})$ be as defined in \eqref{M}. Furthermore, consider the set $\Psi_+^e=\Psi_+^e(\gamma, \kappa,\Omega_{\text{\emph{O}}})$ with 
     \begin{equation}\label{delta_large}
    \kappa  >\gamma>M\,.
   \end{equation}
   Then $v_{\xi,\eta}$ is the unique minimizer of the functional $\mathbb{E}$ with $f=T_{\xi,\eta}\in\tilde{\mathcal F}$,  over the set
    $H^2_{*,\psi}(\Omega)$, namely it satisfies \eqref{var00}. Moreover, $T_{\pi/2, \pm l}$ is the worst force for all $\psi \in\Psi_+^e$, namely
   \begin{equation}\label{worst-expl}
     \max_{f \in \tilde{\mathcal F}} \mathcal{G}^\infty_{f,\psi} = \max_{f \in \tilde{\mathcal F}}  \max_{x \in [0,\pi]} |\mathcal{G}_{f,\psi}(x)|=\mathcal{G}^\infty_{T_{\frac{\pi}{2},\pm l}}\,.
\end{equation}
 In particular, 
   \begin{equation}\label{G tilde}
 \tilde{\mathcal{G}}^\infty:=\min_{\psi \in\Psi_+^e}\max_{f \in \tilde{\mathcal F}} \mathcal{G}^\infty_{f,\psi} =\mathcal{G}^\infty_{T_{\frac{\pi}{2},\pm l}}  \quad  \text{and} \quad \tilde{\mathcal{G}}^\infty<2\gamma\,.
 \end{equation}
\end{thm}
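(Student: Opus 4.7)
The strategy rests on a simple observation: when $\gamma>M$, the obstacle is inactive for every $v_{\xi,\eta}$ with $(\xi,\eta)\in\tilde\Omega$, so that on the family $\tilde{\mathcal F}$ the obstacle problem coincides with the plain partially hinged plate problem analyzed in \cite{BF1}. Granting this reduction, Proposition \ref{conj} will essentially yield \eqref{worst-expl} and \eqref{G tilde} for free.

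The first step is to check admissibility. By the very definition of $M_{\xi,\eta}$ and $M$ in \eqref{M}, and since every $\psi\in\Psi_+^e$ satisfies $\psi\geq \gamma$ in $\Omega_{\text{O}}$, one has the pointwise bound $-\psi\leq -\gamma<-M\leq v_{\xi,\eta}\leq M<\gamma\leq \psi$ in $\Omega_{\text{O}}$, hence $v_{\xi,\eta}\in H^2_{*,\psi}(\Omega)$. Then I would test \eqref{weakxieta} with $\phi=\varphi-v_{\xi,\eta}\in H^2_*(\Omega)$ for an arbitrary $\varphi\in H^2_{*,\psi}(\Omega)$: this shows that $v_{\xi,\eta}$ solves \eqref{var00} with equality. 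By uniqueness of the minimizer of the strictly convex energy $\mathbb{E}$ on the closed convex set $H^2_{*,\psi}(\Omega)$ (guaranteed by the coercivity of $(\cdot,\cdot)_{H^2_*(\Omega)}$ proved in \cite{FeGa}), one concludes that $w_{\xi,\eta}=v_{\xi,\eta}$ for every $\psi\in\Psi_+^e$.

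This identification yields $\mathcal{G}^\infty_{T_{\xi,\eta},\psi}=\mathcal{G}^\infty_{T_{\xi,\eta}}$ for all $\psi\in\Psi_+^e$ and all $(\xi,\eta)\in\tilde\Omega$. Maximizing over $\tilde{\mathcal F}$ and invoking Proposition \ref{conj} delivers \eqref{worst-expl}. Since the right-hand side is independent of $\psi$, the minimum over $\Psi_+^e$ is trivial and equals $\mathcal{G}^\infty_{T_{\frac{\pi}{2},\pm l}}$, giving the first equality in \eqref{G tilde}. For the strict inequality $\tilde{\mathcal G}^\infty<2\gamma$ I would use \eqref{funzionale} together with the standing assumption of Section \ref{best} that $L_+\subseteq \Omega_{\text{O}}$: this gives $\max_{x\in[0,\pi]}|v_{\frac{\pi}{2},\pm l}(x,l)|\leq M_{\frac{\pi}{2},\pm l}\leq M<\gamma$, hence $\mathcal{G}^\infty_{T_{\frac{\pi}{2},\pm l}}\leq 2M<2\gamma$.

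I do not anticipate any substantial obstacle: the content of the result is precisely that the threshold $\gamma>M$ makes the obstacles inactive on $\tilde{\mathcal F}$, and once combined with the already established Proposition \ref{conj} everything becomes a straightforward corollary. The only delicate point worth spelling out is the identification $w_{\xi,\eta}=v_{\xi,\eta}$, which hinges on the uniqueness argument above and ultimately on the admissibility bound provided by the definition of $M$.
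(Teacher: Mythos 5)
Your proposal is correct and follows essentially the same route as the paper: the key step in both is that $\gamma>M$ forces $|v_{\xi,\eta}|\le M<\gamma\le\psi$ on $\Omega_{\text{O}}$, so the contact sets are empty, $w_{\xi,\eta}\equiv v_{\xi,\eta}$, and \eqref{worst-expl}--\eqref{G tilde} follow from Proposition \ref{conj} together with the bound $\mathcal{G}^\infty_{T_{\pi/2,\pm l}}\le 2M<2\gamma$. Your write-up merely spells out the admissibility/uniqueness step (testing \eqref{weakxieta} with $\varphi-v_{\xi,\eta}$) that the paper leaves implicit, having already detailed it in the proof of Proposition \ref{empty}.
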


\begin{proof}
   Under the given assumptions, by recalling the definition of $M$, the solution $v_{\xi,\eta}$ of \eqref{weakxieta} satisfies $|v_{\xi,\eta}(x,y)|\leq M <\gamma \leq  \psi(x,y)$ for all $(x,y) \in \Omega_{\text{O}}$, all $(\xi, \eta) \in \tilde \Omega$ and all $\psi \in\Psi_+^e$. Hence, the contact sets of $v_{\xi,\eta}$ are empty, whence $w_{\xi,\eta}\equiv v_{\xi,\eta}$. Then, \eqref{worst-expl} readily comes from \eqref{GGD}. The second part of the statement instead follows from \eqref{worst-expl} and by noticing that $\tilde{\mathcal{G}}^\infty=\mathcal{G}^\infty_{T_{\frac{\pi}{2},\pm l}}=2 \displaystyle{\max_{x\in [0,\pi]}|v_{\frac{\pi}{2},\pm l}(x, l)|}\leq 2M <2 \gamma$.
\end{proof}

An explicit upper bound for $M$ and, in turn, an explicit lower bound for $\gamma$ so that condition \eqref{delta_large} holds, is given in Proposition \ref{upper} below. The case $\gamma\leq M$ is instead considered in the following statement where we show that $\psi\equiv \gamma$ is a best obstacle when $\Omega_{\text{\emph{O}}}=L_+ \cup L_-$, i.e., it improves the torsional stability. Furthermore, in this case $M$ is given explicitly.

\begin{thm} \label{small delta}
   Assume that $\Omega_{\text{\emph{O}}}=L_+ \cup L_-$ (thin obstacle problem)  and let $M=M(\sigma, l, \Omega_{\text{\emph{O}}})$ be as defined in \eqref{M}. Then
\begin{equation}\label{explicit M}
   M=M(\sigma, l)= \dfrac{4}{\pi}\sum_{m=1,\text{ odd}}^{+\infty} \frac{\sinh(m\ell)^2}{m^3(1-\sigma) [(3+\sigma)\sinh(2m\ell)+2m\ell (1-\sigma)]} \,. 
   \end{equation}
    Furthermore, consider the set $\Psi_+^e=\Psi_+^e(\gamma, \kappa,\Omega_{\text{\emph{O}}})$ with
   $$0<\gamma\leq M \quad \text{and } \kappa>\gamma$$ 
and let $\tilde{\mathcal{G}}^\infty$ be as defined in \eqref{G tilde}. There holds
    $$\tilde{\mathcal{G}}^\infty= 2\gamma \quad \text{and} \quad  \tilde{\mathcal{G}}^\infty= \mathcal{G}^\infty_{T_{\overline \xi,\overline \eta}, \gamma} \text{ for some }(\overline \xi,\overline \eta) \in \tilde \Omega\,.$$
     Namely, $(T_{\overline \xi,\overline \eta}, \gamma)\in  \tilde{\mathcal F}\times\Psi_+^e$ is an optimal pair for $\tilde{\mathcal{G}}^\infty$.
\end{thm}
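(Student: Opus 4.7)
The plan is to derive \eqref{explicit M} from the Fourier representation in Proposition~\ref{monotonia}, and then to establish $\tilde{\mathcal{G}}^\infty = 2\gamma$ through matching upper and lower bounds, from which the optimal pair follows. Since $v_{\xi,\eta}$ is odd in $y$ and $\Omega_{\text{O}}=L_+\cup L_-$, one has $M_{\xi,\eta}=\max_{x\in[0,\pi]}|v_{\xi,\eta}(x,l)|=\tfrac12\mathcal{G}^\infty_{T_{\xi,\eta}}$, so by Proposition~\ref{conj} the value $M=\tfrac12\mathcal{G}^\infty_{T_{\pi/2,l}}$ is attained at $(\xi,\eta)=(\pi/2,\pm l)$. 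Specializing the expansion of Proposition~\ref{monotonia} at $\xi=\pi/2$, $\eta=l$, $y=l$, all even modes vanish (as $\sin(m\pi/2)=0$), and at $x=\pi/2$ each remaining odd term contributes $\sin^2(m\pi/2)=1$; I expect $x=\pi/2$ to realize the maximum thanks to the definite sign of the coefficients $\phi_m(l,l)-\phi_m(l,-l)$. A direct evaluation of this difference via \eqref{soluzdirac2}, using the explicit expressions for $F,\overline F,\overline\zeta,\overline\vartheta,\overline\psi,\overline\omega$, then yields the closed form \eqref{explicit M}.

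For the upper bound $\tilde{\mathcal{G}}^\infty\leq 2\gamma$, I take the constant obstacle $\psi\equiv\gamma\in\Psi_+^e$ (admissible since $\gamma\leq\kappa$). For every $T_{\xi,\eta}\in\tilde{\mathcal{F}}$, the corresponding minimizer $w_{\xi,\eta}$ is odd in $y$ by Lemma~\ref{lemsymbis}(ii) and satisfies $|w_{\xi,\eta}|\leq\gamma$ on $L_+\cup L_-$, so $\mathcal{G}^\infty_{T_{\xi,\eta},\gamma}=2\max_x|w_{\xi,\eta}(x,l)|\leq 2\gamma$, and the definition of $\tilde{\mathcal{G}}^\infty$ yields the claim.

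For the matching lower bound, fix any $\psi\in\Psi_+^e$ and pick $(\overline\xi,\overline\eta)\in\tilde\Omega$ realizing $M_{\overline\xi,\overline\eta}=M$; let $w$ and $v$ denote the constrained and unconstrained minimizers associated with $T_{\overline\xi,\overline\eta}$. If $|v(x,l)|\leq\psi(x,l)$ for every $x\in[0,\pi]$, then $v$ is admissible, hence $w\equiv v$ by uniqueness and $\max_x|w(x,l)|=M\geq\gamma$. Otherwise the contact set must be nonempty: were it empty, admissibility of perturbations $w\pm\varepsilon\phi$ with $\phi\in H^2_*(\Omega)$ and $\varepsilon$ small would force $w$ to solve \eqref{loadpb0weak} with $f=T_{\overline\xi,\overline\eta}$, whence $w=v$ and $|v|>\psi$ somewhere on $L_\pm$ would give a contradiction; consequently $|w(x_0,l)|=\psi(x_0,l)\geq\gamma$ at some $x_0\in(0,\pi)$. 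In either case $\mathcal{G}^\infty_{T_{\overline\xi,\overline\eta},\psi}\geq 2\gamma$, and by arbitrariness of $\psi$ we get $\tilde{\mathcal{G}}^\infty\geq 2\gamma$. Combining the two bounds gives $\tilde{\mathcal{G}}^\infty=2\gamma$, and choosing $\psi\equiv\gamma$ in the dichotomy forces $\max_x|w(x,l)|=\gamma$, showing that $(T_{\overline\xi,\overline\eta},\gamma)$ is an optimal pair. The main obstacle I anticipate is the analytic evaluation of $\phi_m(l,l)-\phi_m(l,-l)$ in the first step; the variational arguments in the remainder are direct once the obstacle-activation dichotomy is set up.
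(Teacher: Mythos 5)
Your variational argument is correct, and for the key lower bound $\tilde{\mathcal{G}}^\infty\geq 2\gamma$ it takes a genuinely different route from the paper. The paper first fixes an optimal pair $(f_{\bar\psi},\bar\psi)$ and then exploits the fact, taken from \cite[Proposition 5.1]{BF1}, that $\eta\mapsto\mathcal{G}^\infty_{T_{\pi/2,\eta}}$ is continuous, even and strictly increasing on $[0,l]$, hence onto $[0,2M]$: for each $\varepsilon>0$ it selects $\eta_\varepsilon$ with $\mathcal{G}^\infty_{T_{\pi/2,\eta_\varepsilon}}=2\gamma-\varepsilon<2\gamma$, so that $v_{\pi/2,\eta_\varepsilon}$ stays strictly below every admissible obstacle, coincides with the constrained minimizer, and yields $\tilde{\mathcal{G}}^\infty\geq 2\gamma-\varepsilon$; then it lets $\varepsilon\to0$. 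You instead use the single worst force $T_{\bar\xi,\bar\eta}$ together with a dichotomy on whether the obstacle is activated: either the free solution $v$ is admissible (so $w\equiv v$ by uniqueness and the gap equals $2M\geq2\gamma$), or contact occurs, and oddness of $w$ combined with evenness of $\psi$ places a contact point on $L_+$ where $|w(x_0,l)|=\psi(x_0,l)\geq\gamma$. This is sound --- the step ``empty contact set implies $w$ solves the free problem'' is exactly the perturbation argument the paper records after \eqref{var1bis} --- it works uniformly in $\psi$ without invoking the existence of an optimal pair a priori, and it dispenses with the monotonicity input from \cite{BF1}. The identification of the optimal pair through $\psi\equiv\gamma$ then goes through as you describe.

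The one genuine gap is the explicit formula \eqref{explicit M}. Both you and the paper correctly reduce $M$ to $\tfrac12\mathcal{G}^\infty_{T_{\pi/2,\pm l}}$ via the identity $M_{\xi,\eta}=\tfrac12\mathcal{G}^\infty_{T_{\xi,\eta}}$ (valid because $\Omega_{\text{O}}=L_+\cup L_-$ and $v_{\xi,\eta}$ is odd) and Proposition \ref{conj}; but the paper then quotes \cite[Lemma 5.4]{BF1} for the closed form, whereas you only outline a computation you have not carried out. In particular, your assertion that the maximum over $x$ of $|v_{\pi/2,l}(x,l)|$ is attained at $x=\pi/2$ requires knowing that the coefficients $(\phi_m(l,l)-\phi_m(l,-l))\sin(m\pi/2)$ have the sign pattern that makes all odd modes add constructively there; Proposition \ref{conj} only locates the maximizer of $(\xi,\eta)\mapsto\mathcal{G}^\infty_{T_{\xi,\eta}}$, not the maximizer in $x$ for fixed $(\xi,\eta)$. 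That positivity information and the subsequent summation are precisely what the cited lemma supplies, so as written \eqref{explicit M} remains unproved in your argument, although the rest of the theorem does not depend on the closed form but only on the value $M=\tfrac12\mathcal{G}^\infty_{T_{\pi/2,\pm l}}$, which you do establish.
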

\begin{proof}
We start by noticing that, by slightly modifying the proof of Theorems \ref{Th max maxbis} and \ref{min exists}, there exists an optimal pair as defined in \eqref{optimal}, say $(f_{\bar \psi},\bar \psi)\in \tilde{\mathcal{F}} \times\Psi_+^e$, such that
\begin{equation} \label{lower}
\tilde{\mathcal{G}}^\infty = \min_{\psi \in \Psi_+^e} \max_{f \in \tilde{\mathcal{F}}} \mathcal{G}^\infty_{f,\psi} = \mathcal{G}^\infty_{f_{\bar\psi}, \bar\psi} \geq \mathcal{G}^\infty_{f, \bar\psi} \quad \text{for all } f \in \tilde{\mathcal{F}}\,.
\end{equation}

On the other hand, since $ \Omega_{\text{O}} = L_+ \cup L_- $, \eqref{funzionale} combined with \eqref{M} yields $ M_{\xi, \eta} = \frac{1}{2} \mathcal{G}^\infty_{T_{\xi,\eta}} $ for all $ (\xi, \eta) \in \tilde{\Omega} $, while \eqref{GGD} gives $ M = \frac{1}{2} \mathcal{G}^\infty_{T_{\frac{\pi}{2}, \pm l}} $. Then, \eqref{explicit M} follows from \cite[Lemma 5.4]{BF1} where the latter value was computed.

Moreover, note that $ \left(\frac{\pi}{2}, \eta\right) \in \tilde{\Omega} $ for all $ \eta \in [-l, l] $, so $ T_{\frac{\pi}{2}, \eta} \in \tilde{\mathcal{F}} $ for all such $\eta $. The map
\[
\eta \in [-l, l] \mapsto \mathcal{G}^\infty_{T_{\frac{\pi}{2}, \eta}}
\]
is continuous and, by \cite[Proposition 5.1]{BF1}, it is even and strictly increasing in $ [0, l] $. In particular, it is bijective from $ [0, l] $ onto $ [0, 2M] $.
 Therefore, since $0<\gamma \leq M$, there exists $ \bar \eta = \bar \eta(\gamma) \in (0, l] $ such that $\mathcal{G}^\infty_{T_{\frac{\pi}{2}, \bar \eta}} = 2\gamma $, and for all $ 0 < \varepsilon < 2\gamma $, there exists $ \eta_\varepsilon \in (0, \bar \eta) $ such that $ \mathcal{G}^\infty_{T_{\frac{\pi}{2}, \eta_\varepsilon}} = 2\gamma - \varepsilon $.
Recalling the definition of $\mathcal{G}^\infty_{T_{\frac{\pi}{2}, \eta_\varepsilon}} $, this implies that
\[
|v_{\frac{\pi}{2}, \eta_\varepsilon}(x, y)| < \gamma \leq \psi(x, y) \quad \text{for all } (x, y) \in \Omega_{\text{O}} \text{ and all } \psi \in\Psi_+^e\,.
\]
 Hence, $v_{\frac{\pi}{2}, \eta_\varepsilon}$ solves the obstacle problem \eqref{var00} for all such $\psi$, hence $v_{\frac{\pi}{2}, \eta_\varepsilon}\equiv w_{\frac{\pi}{2}, \eta_\varepsilon}$. Then, by testing \eqref{lower} with $f=T_{\frac{\pi}{2}, \eta_\varepsilon}$, we get: 
\[
\tilde{\mathcal{G}}^\infty \geq \mathcal{G}^\infty_{T_{\frac{\pi}{2}, \bar \eta}, \bar\psi} = \mathcal{G}^\infty_{T_{\frac{\pi}{2}, \bar \eta}} = 2\gamma - \varepsilon\,.
\]
 Eventually, letting $\varepsilon \to 0^+$ and recalling the upper bound \eqref{upper_bound}, we get the value of $\tilde{\mathcal{G}}^\infty$. The last part of the thesis follows arguing as in the last part of the proof of Proposition \ref{Prop min cost 1d}.
\end{proof}
\nc

By combining the statements of Theorems \ref{large delta} and \ref{small delta}, we deduce the following:
\begin{cor}
Assume that $\Omega_{\text{\emph{O}}}=L_+ \cup L_-$ and let $M=M(\sigma, l)$ be as defined in \eqref{explicit M}. Furthermore, consider the set $\Psi_+^e=\Psi_+^e(\gamma, \kappa,\Omega_{\text{\emph{O}}})$ with $\kappa>\gamma>0$ and let $\tilde{\mathcal{G}}^\infty$ be as defined in \eqref{G tilde}. Then, we have
\begin{itemize}
    \item[(i)] $\gamma >M \Longleftrightarrow \tilde{\mathcal{G}}^\infty < 2 \gamma;$
    \item[(ii)] $\gamma  \le M \Longleftrightarrow \tilde{\mathcal{G}}^\infty = 2 \gamma.$
\end{itemize}     
\end{cor}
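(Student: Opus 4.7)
The plan is to observe that this corollary is essentially a repackaging of Theorems \ref{large delta} and \ref{small delta} in the thin-obstacle case $\Omega_{\text{O}} = L_+\cup L_-$, so the proof reduces to combining the two and verifying that the implications in each direction are exhausted. The key auxiliary fact I would isolate first is that, by the identity $M_{\xi,\eta} = \tfrac{1}{2}\mathcal{G}^\infty_{T_{\xi,\eta}}$ (valid when $\Omega_{\text{O}} = L_+ \cup L_-$, by combining \eqref{funzionale} and \eqref{M}) together with \eqref{GGD}, one has $2M = \mathcal{G}^\infty_{T_{\pi/2,\pm l}}$. This identification is what connects the threshold $M$ in the hypothesis with the value of $\tilde{\mathcal{G}}^\infty$ given by Theorem \ref{large delta}.

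Next, I would prove the forward implications directly. If $\gamma > M$, Theorem \ref{large delta} gives $\tilde{\mathcal{G}}^\infty = \mathcal{G}^\infty_{T_{\pi/2,\pm l}} = 2M < 2\gamma$, which proves $(\Rightarrow)$ of $(i)$. If $\gamma \leq M$, Theorem \ref{small delta} gives $\tilde{\mathcal{G}}^\infty = 2\gamma$, which proves $(\Rightarrow)$ of $(ii)$.

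For the reverse implications, I would argue by contradiction using the general upper bound \eqref{upper_bound}, i.e., $\tilde{\mathcal{G}}^\infty \leq 2\gamma$. If $\tilde{\mathcal{G}}^\infty < 2\gamma$ and we had $\gamma \leq M$, then Theorem \ref{small delta} would force $\tilde{\mathcal{G}}^\infty = 2\gamma$, a contradiction; hence $\gamma > M$, proving $(\Leftarrow)$ of $(i)$. Symmetrically, if $\tilde{\mathcal{G}}^\infty = 2\gamma$ and we had $\gamma > M$, then Theorem \ref{large delta} would give $\tilde{\mathcal{G}}^\infty = 2M < 2\gamma$, again a contradiction; hence $\gamma \leq M$, proving $(\Leftarrow)$ of $(ii)$. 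Note moreover that $(i)$ and $(ii)$ are logically equivalent via the contrapositive and the trichotomy $\tilde{\mathcal{G}}^\infty \leq 2\gamma$, so it suffices to establish one of them together with the dichotomy provided by the two theorems.

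There is essentially no obstacle in this proof: all the analytical work is already in Theorems \ref{large delta} and \ref{small delta}, and the corollary is a short logical consequence. The only minor point worth verifying carefully is that the two theorems cover the complementary regimes $\gamma > M$ and $\gamma \leq M$ without gap or overlap (they do: Theorem \ref{large delta} requires $\gamma > M$, Theorem \ref{small delta} allows $0 < \gamma \leq M$), so the implications combine cleanly into a biconditional.
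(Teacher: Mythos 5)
Your proof is correct and takes essentially the same route as the paper, which states the corollary precisely as "by combining the statements of Theorems \ref{large delta} and \ref{small delta}" without further argument; your write-up simply makes explicit the standard logic that two forward implications over complementary regimes with mutually exclusive conclusions yield both biconditionals. The identification $2M=\mathcal{G}^\infty_{T_{\pi/2,\pm l}}$ that you isolate is indeed the same one established in the proof of Theorem \ref{small delta}.
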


We conclude with an estimate that provides a more explicit indication of how much large $\gamma$ must be in Theorem \ref{large delta}. 

\begin{prop}\label{upper}
 Let $M=M(\sigma, l, \Omega_{\text{\emph{O}}})$ be as defined in \eqref{M}, there holds
\begin{equation}\label{C}
M\leq C(\sigma, l):= \frac{\pi \cosh^2(l) \left[5+2\sigma+\sigma^2+2l(5+2\sigma)(1-\sigma)+ 8l^2(1-\sigma)^2\right]}{6(1-\sigma)[(3+\sigma)\sinh(2l)-l(1+\sigma)]} +\frac{\pi}{12}\,.
\end{equation}
\end{prop}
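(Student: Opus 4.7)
The plan is to start from the Fourier expansion of $v_{\xi,\eta}$ given in Proposition \ref{monotonia}, estimate the series term by term, and track separately an ``easy'' residual contribution producing the additive $\pi/12$, and a ``main'' anti-symmetric contribution producing the first term of $C(\sigma,l)$. Using the triangle inequality together with the elementary bounds $|\sin(m\xi)|\leq m\sin(\xi)\leq m$ and $|\sin(mx)|\leq 1$ already exploited in Section \ref{Green}, I would reduce the task to controlling $\sum_m |\phi_m(y,\eta)-\phi_m(y,-\eta)|/m^2$ uniformly in $(x,y)\in\overline\Omega$ and $(\xi,\eta)\in\tilde\Omega$, and then invoke $\sum 1/m^2=\pi^2/6$.

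Plugging \eqref{soluzdirac2} into the difference, the pieces that are even in $\eta$ cancel, leaving the explicit identity
\begin{equation*}
\phi_m(y,\eta)-\phi_m(y,-\eta)=2e^{-ml}\left[\sinh(m\eta)\,\frac{\bar\vartheta(my,ml)+ml\,\bar\omega(my,ml)}{\bar F(ml)}-m\eta\cosh(m\eta)\,\frac{\bar\omega(my,ml)}{\bar F(ml)}\right]+R_m(y,\eta),
\end{equation*}
with residual $R_m(y,\eta):=(1+m|y-\eta|)e^{-m|y-\eta|}-(1+m|y+\eta|)e^{-m|y+\eta|}$. Since $(1+r)e^{-r}\in(0,1]$ for every $r\geq 0$, the crude bound $|R_m|\leq 2$ suffices, and its contribution to $|v_{\xi,\eta}|$ is at most $\frac{1}{4\pi}\sum_m\frac{2}{m^2}=\frac{\pi}{12}$, matching exactly the additive constant of $C(\sigma,l)$.

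For the remaining anti-symmetric bracket I would first extend the estimates \eqref{coeff-est} to general arguments $(my,ml)$: inspection of the definitions of $\bar\vartheta,\bar\omega$ shows that the same triangular bound gives $|\bar\vartheta(my,ml)|\leq A(ml,\sigma)$ and $|\bar\omega(my,ml)|\leq B(ml,\sigma)$ whenever $|y|\leq l$. Combining these with $|m\eta|\leq ml$, $|\cosh(m\eta)|\leq\cosh(ml)$, $|\sinh(m\eta)|\leq\sinh(ml)$, the identity $\cosh(ml)+\sinh(ml)=e^{ml}$, and $e^{-ml}\sinh(ml)\leq 1/2$, I would reach a pointwise estimate of the form $[A(ml,\sigma)+2ml\,B(ml,\sigma)]/\bar F(ml)$ for the anti-symmetric part. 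The delicate step is then showing that this expression is controlled, uniformly in $m\geq 1$, by its value at $m=1$, so that the $m$-dependence collapses and the series is summable; to this end I would exploit the monotonicity of $z\mapsto \cosh^2(z)/F(z)$ (checked by a direct derivative computation analogous to those carried out in \cite{BF2}), together with the elementary identities $F(l)+\bar F(l)=(3+\sigma)\sinh(2l)$ and $\bar F(l)-F(l)=2l(1-\sigma)$, so as to produce exactly the denominator $(3+\sigma)\sinh(2l)-l(1+\sigma)$ and the polynomial $5+2\sigma+\sigma^2+2l(5+2\sigma)(1-\sigma)+8l^2(1-\sigma)^2$ in the numerator. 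The main obstacle is precisely this algebraic step: since $A(ml,\sigma)$ and $B(ml,\sigma)$ both contain a $\cosh^2(ml)$ factor multiplied by a polynomial in $ml$, one must carefully match their growth against that of $\bar F(ml)\sim\tfrac{3+\sigma}{4}e^{2ml}$ so as to recover a truly uniform bound with the sharp constant appearing in $C(\sigma,l)$ after summing with $\sum_m 1/m^2=\pi^2/6$.
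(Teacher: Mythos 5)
There is a genuine gap at the step you yourself flag as delicate. After replacing $|\sin(m\xi)|$ by $m$ you must sum $\tfrac{1}{m^2}\,\bigl|\phi_m(y,\eta)-\phi_m(y,-\eta)\bigr|$, and your term-by-term bounds give for the antisymmetric bracket a quantity of order $\bigl[A(ml,\sigma)+2ml\,B(ml,\sigma)\bigr]/\overline F(ml)$. This is \emph{not} controlled by its value at $m=1$: since $A(z,\sigma)$ and $zB(z,\sigma)$ carry $\cosh^2(z)\sim e^{2z}/4$ times polynomials in $z$ of degrees $1$ and $2$, while $\overline F(z)\sim\tfrac{3+\sigma}{4}e^{2z}$, the ratio grows like $(ml)^2$ as $m\to\infty$. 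Consequently $\sum_m m^{-2}\cdot O(m^2)$ diverges and your series estimate collapses; the monotonicity of $z\mapsto\cosh^2(z)/F(z)$ cannot rescue a quotient that is genuinely unbounded. The crude bounds on $\overline\vartheta,\overline\omega$ destroy exactly the cancellation that keeps $\phi_m$ bounded in $m$.

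The paper avoids this entirely by invoking the positivity and monotonicity \eqref{decreasing}: since $0<\phi_m(y,\pm\eta)\le\phi_1(y,\pm\eta)$, one has $|\phi_m(y,\eta)-\phi_m(y,-\eta)|\le\phi_1(y,\eta)+\phi_1(y,-\eta)$ for every $m$, so the whole series is reduced to the single \emph{sum} $\phi_1(y,\eta)+\phi_1(y,-\eta)$ multiplied by $\tfrac{1}{4\pi}\sum m^{-3}\le\tfrac{\pi}{24}$. Note that this replaces your odd-in-$\eta$ combination by the even one, in which the surviving coefficients are $\overline\zeta,\overline\psi$ with denominator $F(l)$ rather than $\overline\vartheta,\overline\omega$ with denominator $\overline F(l)$; this is also why the explicit constant in \eqref{C} involves $(3+\sigma)\sinh(2l)-l(1+\sigma)$, which your route could not produce. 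Your treatment of the residual terms $(1+m|y\mp\eta|)e^{-m|y\mp\eta|}$ and the resulting $\pi/12$ is correct and coincides with the paper's.
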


\begin{proof}
In view of \eqref{decreasing},  since $\displaystyle{\sum_{m=1}^{+\infty} \dfrac{1}{m^3} \leq \sum_{m=1}^{+\infty} \dfrac{1}{m^2}=\dfrac{\pi^2}{6}}$, we immediately get that

\begin{equation*}
	|v_{\xi,\eta}(x,y)|\leq \left(\dfrac{1}{4\pi}\sum_{m=1}^{+\infty} \dfrac{1}{m^3} \right)
    \left(\phi_1(y,\eta)+\phi_1(y,-\eta) \right) \leq \frac{\pi}{24}\left(\phi_1(y,\eta)+\phi_1(y,-\eta) \right)  \qquad
	\end{equation*}
   $\forall (\xi,\eta)\in\tilde \Omega\,, \forall (x,y)\in\overline \Omega\,$.
    Furthermore, from  \eqref{soluzdirac2} we have that
    \begin{equation*}
		\begin{split}
		\phi_1(y,\eta)+\phi_1(y,-\eta)=
		e^{-l}\bigg[& 2 \cosh(\eta)\bigg(\dfrac{\overline \zeta(y,l)}{F(l)}+l\dfrac{\overline \psi(y,l) }{F(l)}
        \bigg)-2 \sinh( \eta)\eta\dfrac{\overline\psi(y,l)}{F(l)}\bigg]
		\\+&(1+|y-\eta|)e^{-|y-\eta|}+(1+|y+\eta|)e^{-|y+\eta|}
		\end{split}
		\end{equation*}
            Then, exploiting the estimates given in \eqref{coeff-est}, since $F(l)>0$, we get
        \begin{equation*}
		\begin{split}
		&\phi_1(y,\eta)+\phi_1(y,-\eta)\leq
		 \dfrac{2 e^{-l} \cosh(l) }{F(l)} \bigg[|\overline \zeta(y,l)|+2l |\overline \psi(y,l)| \bigg] +2\\
         & \leq\frac{4 \cosh^2(l) \left[5+2\sigma+\sigma^2+4l(3+\sigma)(1-\sigma)+ 8l^2(1-\sigma)^2\right]}{(1-\sigma)(3+\sigma)\sinh(2l)-l(1-\sigma^2)} +2 
         \,.
		\end{split}
		\end{equation*}
        Summing up, we have
$$
	|v_{\xi,\eta}(x,y)|\leq C(\sigma, l)  \qquad \forall (\xi,\eta)\in\tilde \Omega\,, \forall (x,y)\in\overline \Omega\,
    $$
   with $C(\sigma, l)$ as defined in \eqref{C}.
\end{proof}

\par
\smallskip
\noindent
\textbf{Acknowledgments.} The authors are members of the Gruppo Nazionale per l'Analisi Matematica, la Probabilit\`a e le loro Applicazioni (GNAMPA, Italy) of the Istituto Nazionale di Alta Matematica (INdAM, Italy). A.G. Grimaldi and F. Feo have been partially supported through the INdAM - GNAMPA 2025 Project "Regolarità di soluzioni di equazioni paraboliche a crescita non standard degeneri" (CUP: E5324001950001) and "Esistenza, unicità, simmetria e stabilità per problemi ellittici nonlineari e nonlocali" (CUP: E5324001950001), respectively. The research was partially carried out within the PRIN 2022 project: Geometric-Analytic Methods for PDEs and Applications GAMPA, ref. 2022SLTHCE (CUP: E53D2300588 0006 and I53D2300242 0006) funded by European Union - Next Generation EU within the PRIN 2022 program (D.D. 104 - 02/02/2022 Ministero dell'Universit\`a e della Ricerca). This manuscript reflects only the authors' views and opinions and the Ministry cannot be considered responsible for them.

\end{document}